\newtheorem{theorem}{Theorem}
\newtheorem{lemma}{Lemma}
\newtheorem{remark}{Remark}
\newtheorem{assumption}{Assumption}
\newtheorem{corollary}{Corollary}
\newtheorem{problem}{Problem}
\newtheorem{proposition}{Proposition}
\def\ee{{\epsilon}}
\def\be{\begin{equation}}
\def\bea{\begin{eqnarray}}
\def\beas{\begin{eqnarray*}}
\def\eea{\end{eqnarray}}
\def\eeas{\end{eqnarray*}}
\def\bi{\begin{itemize}}
\def\ee{\end{equation}}
\def\ei{\end{itemize}}
\def\z1{z^{-1}}
\def\la{\label}
\begin{document}
\title{Robust and Scalable Tracking of Radiation Sources with Cheap Binary Proximity Sensors}

\author{Henry~E.~Baidoo-Williams,~\IEEEmembership{Member,~IEEE,}
\thanks{H. E. Baidoo-Williams was with the US Army Research Laboratory, Aberdeen Proving Ground, MD and 
University at Buffalo, SUNY, NY, 14150, USA e-mail: (henrybai AT buffalo DOT edu).}
}

\maketitle

\begin{abstract}
We present a new approach to tracking of radiation sources moving on smooth trajectories which can be approximated with piece-wise linear joins or piece-wise linear parabolas. We employ the use of cheap binary proximity sensors which only indicate when a radiation source enters and leaves its sensing range. We present two separate cases. The first is considering that the trajectory can be approximated with piece-wise linear joins. We develop a novel scalable approach in terms of the number of sensors required. Robustness analysis is done with respect to uncertainties in the timing recordings by the radiation sensors. We show that in the noise free case, a minimum of three sensors will suffice to recover one piece of the linear join with probability one, even in the absence of knowledge of the speed and statistics of the radiation source. Second, we tackle a more realistic approximation of trajectories of radiation sources -- piece-wise parabolic joins -- and show that no more than six sensors are required in the noise free case to track one piece of the parabola with probability one. Next we present an upper bound on the achievable error variance in the estimation of the constant speed and the angle of elevation of linear trajectories. Finally, a comprehensive set of simulations are presented to illustrate the robustness of our approach in the presence of uncertainties.
\end{abstract}

\begin{IEEEkeywords}
Tracking, Localization, Estimation, Binary sensor network, Radiation sources, Cram\'{e}r-Rao lower bound.
\end{IEEEkeywords}

\IEEEpeerreviewmaketitle

\section{Introduction}
\IEEEPARstart{T}{racking} and localization of radiation sources based on the received photon count at radiation sensors has garnered recent interest\cite{liu}, \cite{nem}, \cite{how}, \cite{bw}. The problem of tracking the path of a source using binary proximity sensors has been around for some time \cite{bus},\cite{kim}, \cite{shri}, \cite{aslam}, \cite{shri2}, \cite{bai}, \cite{ngu}. For tracking of radiation sensors, there is a great motivation for using binary proximity sensors. It is easy to argue that the intrinsic nature of radiation sources and their adverse effects require the surveillance of vast regions. Coupled with the fact that radiation sensors -- binary or otherwise -- are not as cheap as compared to say wireless sensors, a delicate balance has to be made to use these multiple error prone but not so cheap radiation sensors to attain reasonable errors in tracking. 

In this paper, we consider the use of cheap binary proximity sensors in tracking under some fairly general assumptions on the smoothness and continuity of source trajectory. We consider that the trajectory of the radiation source can be fairly approximated with piece-wise joins. To scenarios are considered -- piece-wise linear and piece-wise parabolic. In our formulation, we consider tracking using the exponential distributed arrival times of the photons and the total Poisson distributed count at radiations sensors. To be precise, we consider binary sensors which co-ordinate with a fusion center. The binary proximity sensors record two transition times and relay this information to the fusion center for tracking. The two transition times are the times when a radiation source enters the detection range of the sensor and when it leave it. We assume that all observations are available at the fusion center, thus no consideration is made in this paper for using distributed algorithms at the sensors to reduce overhead costs. 

The rest of the paper is organized as follows: section \ref{relw} presents a brief survey of related literature. Section \ref{skc} presents a summary of our contributions.  Section \ref{sps} presents a formal statement of the problems presented in this paper. Section \ref{sec:prelim} presents preliminary solutions and algorithms with particular application to the noise free case. Section \ref{sec:num} presents results on scalability and robustness of the least squares algorithms presented in this paper. Section \ref{sab} presents analysis on the upper bound on the achievable Cram\'{e}r-Rao lower bound on the estimation errors of the constant speed and angle of elevation of linear trajectories. Sections \ref{ssim} presents a comprehensive set of simulations to illustrate the efficacy of the algorithms developed in this paper whiles section \ref{scon} concludes.

\section{Related work} \la{relw}
Binary sensors present a rich toolset for covering large regions of interest. However, by nature, they are very noisy, and depending on the physics of the signals of interest, may compound the attainable accuracies of estimated parameters. Radiation sources,  by nature are inherently noisy even without any ``{\it conventional}" background noise. This is because the received signals are either the photon count per unit time which is poisson distributed or the inter-arrival times of the photons which are exponentially distributed. 

The existing studies on tracking of sources on piece-wise linear trajectories using binary proximity sensors considered a least squares approach \cite{bw} and Newton based approach in \cite{bw2}. In \cite{bw}, the authors showed that using four generically placed sensors, each piece of the linear joins can be estimated precisely in the no noise case. The authors did a robustness analysis, however, on the detection range of the sensors. This approach, which will be appropriate in many physical phenomena like wireless and acoustic signals, is inadequate for radiation sources. In radiation sources, the errors are in the arrival times. The approach in \cite{bw} also assumes implicitly that the source signal strength at unit distance and no shielding is known. This translates to knowledge of the detection range in the noise free case. It is worthy of note that without the knowledge of the detection range, the algorithm in \cite{bw} is not implementable. Further, the algorithm in \cite{bw} produces $2^{n-1}$ possible solutions obtained from $2^{n-1}$ least squares problems where $n$ is the number of sensors being utilized by linear trajectory. This is not scalable. In \cite{bw2}, the authors consider tracking of piece-wise linear trajectories with unknown detection range in the noise free case and show that four generically placed sensors will suffice to localize a single linear trajectory in the noise free case with probability one. The authors however use a Newton based algorithm with no guarantee of convergence unless initialization is done in the basin of attraction of the true solution. Again, the authors did a robustness analysis on the detection range. 

Other existing studies do not consider piece-wise joins, but rely on high density of sensors to guarantee that every motion is within the detection range of multiple sensors \cite{shri}, \cite{shri2}, \cite{wang}. These studies also assume implicitly that the source signal strength at unit distance and no shielding is known.

\section{Our contributions}  \la{skc}
This brings us to the key contribution of this paper:
\begin{enumerate}
\item We consider tracking of radiation sources with binary proximity sensors on piece-wise linear trajectories and make the following novel contributions:
\begin{itemize}
\item We show that contrary to existing literature, three generically placed sensors, not four, suffice to estimate a linear trajectory with probability one in the absence of noise even if we have no knowledge of the statistics of the radiation source. 
\item Our least squares algorithm for tracking on linear trajectories requires precisely 2 least squares problems as compared to $2^{n-1}$ in \cite{bw}.  
\item We consider a realistic model -- for tracking of radiation sources -- in the sense that the noise is in the arrival times and is Erlang distributed with errors scaling as a squared proportion of the arrival times at the boundaries of the unknown detection range. 
\end{itemize}
\item We extend the results from \cite{bw} and \cite{bw2} to a piece-wise parabolic trajectories with a realistic error model in the inter-arrival times corresponding to the time of shortest distance of approach to sensors. We show that no more that six sensors suffice in the noise free case to localize a linear trajectory with probability one. 
\end{enumerate}

\section{Problem Statement} \la{sps}
We now formally state the problems to be tackled in this paper. The problems are categorized under: piece-wise linear and piece-wise parabolic. 
\subsection{Tracking of a radiation source on a piece-wise linear trajectory}
Consider a mobile radiation source in $\mathbb{R}^2$ on a piece-wise linear trajectory with the $i^{th}$ piece of the linear trajectory defined as:
\bea \la{ltx}
\begin{split}
x_i(t) &= x_{oi}^* +s_{xi}^* (t-t_{oi}) \sin{\theta_i^*} \\
y_i(t) &= y_{oi}^* +s_{yi}^* (t-t_{oi}) \cos{\theta_i^*}
\end{split}
\eea
where $\{x_{oi}^*, y_{oi}^*\}$ is the location of the radiation source at time instant $t =t_{oi}$. $\{s_{xi}^*, s_{yi}^* \}$ are constant speeds with respect to the $x$ and $y$ axis respectively in the cartesian plane. ${\theta_i}$ is the elevation angle with respect to the $x$ axis of the cartesian plane. 

Consider $n$ binary proximity sensors in $\mathbb{R}^2$ under the following assumption:
\begin{assumption} \la{pso}
The location of each sensor is a point source $\{x_j, y_j\}, j \in \{1, \cdots, m\}$. Further, the time instantaneous location of the radiation source $\{x_i(t), y_i(t)\}, i \in \{1, \cdots, n\}$ is also a point source.
\end{assumption}
Under assumption \ref{pso}, the time instantaneous distance of the $j^{th}$ radiation sensor, $, \in \{1, \cdots, n\}$,  on the $i^{th}$ linear trajectory, $i \in \{1, \cdots, m\}$ is defined as:
\bea \la{dist}
d_{ij}(t)=\sqrt{(x_i(t) - x_j)^2+(y_i(t) - y_j)^2}
\eea
The binary proximity radiation sensors measure the number of photons received at the $j^{th}$ sensor, $j \in \{1, \cdots, n\}$,  and output a time stamp or not depending on whether the aggregate number of received photons within a time window is greater or less than an arbitrary threshold which is set a priori.  Time transitions corresponding to a sensors moving form outside the detection range to inside the detection range of sensors are recorded and transmitted to a fusion center. Precisely, the $j^{th}$ sensor's measurements, $j \in \{1, \cdots, n\}$, which are transmitted to the fusion center whiles the radiation source is on the $i^{th}$ piece of the linear trajectory, $i \in \{1, \cdots, m\}$,  in the noise free case are:
\bea \la{tinit}
t_{ij1}^* = \underset{t}{\operatorname{argmin}} \int\limits_{t-\frac{1}{2}}^{t+\frac{1}{2}} \frac{e^{-\alpha_{s} d_{ij}^*(\tau)}}{d_{ij}^{*2}(\tau)} d\tau \ge \frac{\lambda_T -\lambda_b}{\lambda_s^* } 
\eea
\bea \la{tfinal}
t_{ij2}^* = \underset{t}{\operatorname{argmax}} \int\limits_{t-\frac{1}{2}}^{t+\frac{1}{2}} \frac{e^{-\alpha_{s} d_{ij}^*(\tau)}}{d_{ij}^{*2}(\tau)} d\tau \ge \frac{\lambda_T -\lambda_b}{\lambda_s^* } 
\eea
From (\ref{tinit}) -- (\ref{tfinal}), the time of shortest distance of approach to each sensor $j \in \{1, \cdots, n\}$ corresponding from a particular linear trajectory $i \in \{1, \cdots, m\}$ becomes:
\bea \la{tstar}
t_{ij}^* =\frac{t_{ij1}^*+t_{ij2}^*}{2}
\eea
Here, $\lambda_s^*$ is the photon count at unit distance and zero shielding per second. $\alpha_{s}$ is the shielding coefficient which depends on type and geometric shape of the radiation material. $\lambda_T > \lambda_b$ implicitly defines the expected maximum radius beyond which the sensor is ``{\it off}", and ``{\it on}" otherwise. $\lambda_b$ is normally occurring radiation background noise. In reality, both $\lambda_s^*$ and $\lambda_b$ are poisson random variables.

Suppose the following assumptions hold:
\begin{assumption} \la{ass:one}
The radiation source intensity $\lambda_s$, is unknown.
\end{assumption} 
\begin{assumption} \la{ass:two}
The location of the sensors are all distinct,  independent and identical distributed and zero mean along both the $x$ and $y$ axis in the cartesian plane.
\end{assumption} 
\begin{assumption} \la{ass:three}
The radiation source moves with a constant speed for each piece of the trajectory. Thus $\{ s_{xi}^* , s_{yi}^* \} = s_i^*~ \forall~ i, i \in \{1, \cdots, m\}$. 
\end{assumption}
\begin{assumption} \la{ass:four}
The received radiation photon count at each sensor with respect to any single piece of the trajectory, under noise free case, is unimodal. 
\end{assumption}

It is clear that the expected instantaneous received signal at sensor $j$ with respect to the $i^{th}$ trajectory is given as:
\bea \la{nf}
z_{ij}(t) = \frac{\lambda_s^* e^{-\alpha_s d_{ij}(t)}}{d_{ij}^{2}(t)} + \lambda_b
\eea
The expected value of the peak with respect to time of (\ref{nf}) obeys: 
\begin{align*}
\begin{split}
&\dot{z}_{ij}(t) = \frac{\lambda_s^* e^{-\alpha_s d_{ij}(t)}}{d_{ij}^{2}(t)}\left( \alpha_s +\frac{2}{d_{ij}(t)}\right) \frac{1}{d_{ij}(t)} \dot{d}_{ij}(t) \Rightarrow\\
&\dot{d}_{ij}(t_{ij}^*) =0 \Rightarrow\\
&0 =\frac{(x_i(t_{ij}^*)-x_{j})s_i^*\sin{\theta_i^*}+(y_i(t_{ij}^*)-y_{j})s_i^*\cos{\theta_i^*}}{d_{ij}^*(t_{ij}^*)}
\end{split}
\end{align*}
leading to:
\bea \la{tstar0}
 t_{ij}^* -t_{oi}= \frac{\sin{\theta_i^*}}{s_i^*} \left(x_j -x_{oi}^*\right) +  \frac{\cos{\theta_i^*}}{s_i^*} \left(y_j -y_{oi}^*\right)
\eea
This brings us to the first  problem statement:
\begin{problem} \la{prob:one}
Under (\ref{ltx}), suppose assumptions \ref{pso}, \ref{ass:one}, \ref{ass:two}, \ref{ass:three} and \ref{ass:four} hold.  Given:
\begin{enumerate}
\item the sensor readings $\{t_{ij1}^*,t_{ij2}^*\}$ , $i \in \{1, \cdots, m\}$, $j \in \{1, \cdots, n\}$ 
\item the sensor locations $\{x_j, y_j \}~ \forall j \in \{1, \cdots, n\}$
\end{enumerate}
\begin{itemize}
\item can we estimate $\{x_{oi}^*, y_{oi}^*,  \theta_i^*, s_{i}^*\}$? 
\item is the estimate unique? 
\item what is the minimum number of sensor readings required for estimation of a single linear piece?
\item suppose there is noise in the recorded times, can we still estimate the line robustly?
\end{itemize}
\end{problem} 
\subsection{Tracking of radiation source on a piece-wise parabolic trajectories}
Now consider a  radiation source in $\mathbb{R}^2$ whose $i^{th}$ piece of a piece-wise parabolic trajectory obeys:
\bea \la{ptx}
x_i(t) &= x_{oi}^* +\alpha_{i}^* (t-t_{oi}) \\
y_i(t) &= y_{oi}^* +\beta_i^* (t-t_{oi}) +\frac{1}{2} \gamma_i^* (t-t_{oi})^2
\eea
where $\{x_{oi}^*, y_{oi}^*\}, i \in \{1,\cdots, m\}$ is the location of the radiation source at time instant $t =t_{oi}, i \in \{1,\cdots, m\}$. Consequently,  the $j^{th}$ sensor's measurements corresponding to the time of shortest approach, $j \in \{1, \cdots, n\}$,  whiles on the $i^{th}$ piece of the parabolic trajectory, $i \in \{1, \cdots, m\}$, is as defined in (\ref{tstar}). The time of shortest approach of the $i^{th}$ trajectory from the $j^{th}$ sensor following a similar derivation leading to (\ref{tstar0}) is:
\begin{align} \la{tstar1}
\frac{1}{2 }& \gamma_i^*  (t_{ij}^*-t_{oi})^{3}  +\frac{3}{2}  \beta_i^* (t_{ij}^*-t_{oi})^{2} +\nonumber \\
&\left (\frac{\alpha_i^{*2}+\beta_i^{*2}}{ \gamma_i^*}+y_{oi}^*\right) (t_{ij}^{*}-t_{oi}) +\frac{\alpha_i^*}{ \gamma_i^*} (x_{oi}^* -x_j)  +\nonumber \\
&\frac{\beta_i^*}{ \gamma_i^*} (y_{oi}^* -y_j) = y_j  (t_{ij}^{*} -t_{oi})
\end{align}
This brings us to the second  problem statement:
\begin{problem} \la{prob:one}
Under (\ref{ptx}), suppose assumptions \ref{pso}, \ref{ass:one}, \ref{ass:two}, \ref{ass:three} and \ref{ass:four} hold.  Given:
\begin{enumerate}
\item the sensor readings of the time of shortest approach $t_{ij}^*$ , $i \in \{1, \cdots, m\}$, $j \in \{1, \cdots, n\}$ 
\item the sensor locations $\{x_j, y_j \}~ \forall j \in \{1, \cdots, n\}$
\end{enumerate}
\begin{itemize}
\item can we estimate $\{x_{oi}^*, y_{oi}^*,  \alpha_i^*, \beta_{i}^*, \gamma_i^*\}$? 
\item is the estimate unique? 
\item what is the minimum number of sensor readings required for estimation of a single parabolic piece?
\item suppose there is noise in the recorded times, can we still estimate the parabola robustly?
\end{itemize}
\end{problem} 

\section{Preliminaries}\la{sec:prelim}
We now consider the two problems described in section \ref{sps}.
\subsection{Tracking of piece-wise linear trajectories under no noise conditions}

Suppose that a radiation source is on a line as defined in (\ref{ltx}) and suppose the radiation source triggers $n$ radiation sensors such that their time of shortest distance of approach $t_{ij}^*-t_{oi}$, $i \in \{1, \cdots, m\}, ~j \in \{1,\cdots,n\}$, is recorded and transmitted to the fusion center. Also, suppose the locations of the radiation sensors are known. The question being asked is whether we can uniquely determine the parameters of (\ref{ltx}) from the given information. To begin, we have the following theorem:
\begin{theorem} \la{th:one}
Suppose a radiation source on a linear trajectory, as defined in (\ref{ltx}), triggers $n = 3$ sensors such that:
\begin{itemize}
\item the times stamps , $\{t_{ij1}^*, t_{ij2}^*\}, i \in \{1, \cdots, m\}, ~j \in \{1,2,3\}$ are available.
\item the locations of the centers of the $n = 3$  radiation sensors are independent and identically distributed in $\mathbb{R}^2$.
\end{itemize}
 Then with probability being equal to one, the line (\ref{ltx}) is uniquely determined based on the $n = 3$ radiation sensor centers centers $(x_j, y_j), j \in \{1,2,3\}$ and the $\{t_{ij1}^*, t_{ij2}^*\}, i \in \{1, \cdots, m\}, ~j \in \{1,2,3\}$,  information.
\end{theorem}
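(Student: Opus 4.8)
The plan is to split the six recorded times into two groups carrying complementary geometric information: the midpoints $t_{ij}^* = (t_{ij1}^*+t_{ij2}^*)/2$, which by (\ref{tstar}) are the times of shortest approach, and the gaps $\Delta_j := t_{ij2}^* - t_{ij1}^*$, which measure how long the source dwells inside each sensor's detection disk. I would show that the midpoints pin down the direction and speed together with the projection of the source onto the direction of travel, while the gaps supply the one missing degree of freedom -- the perpendicular offset of the trajectory -- along with the unknown detection radius.

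First I would exploit that (\ref{tstar0}) is affine in the reparametrized unknowns $a := \sin\theta_i^*/s_i^*$, $b := \cos\theta_i^*/s_i^*$ and $c := t_{oi} - a x_{oi}^* - b y_{oi}^*$, giving for $j \in \{1,2,3\}$ the linear system $a x_j + b y_j + c = t_{ij}^*$. Its coefficient matrix has rows $(x_j, y_j, 1)$ and is nonsingular precisely when the three sensor centers are not collinear; since the centers are i.i.d. with a continuous law (Assumption \ref{ass:two}), collinearity is a measure-zero event, so the system has a unique solution with probability one. From $(a,b)$ I recover $s_i^* = 1/\sqrt{a^2+b^2}$ and, via $\sin\theta_i^* = a s_i^*$, $\cos\theta_i^* = b s_i^*$, the angle $\theta_i^*$ unambiguously in $[0,2\pi)$ -- the orientation is fixed because reversing the direction of travel flips the sign of $(a,b)$. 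The scalar $c$ then yields one linear relation on the starting point, namely $x_{oi}^*\sin\theta_i^* + y_{oi}^*\cos\theta_i^* = s_i^*(t_{oi}-c)=:q$, i.e.\ the component of $(x_{oi}^*,y_{oi}^*)$ along the direction of motion.

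The crux is that these midpoints are invariant under sliding the entire line parallel to itself, so a second ingredient is needed. Here I would use that, since $z_{ij}$ in (\ref{nf}) is monotone in distance, the sensor is ``on'' exactly along the chord of a detection disk of some common (unknown) radius $R$, whence $\Delta_j = 2\sqrt{R^2-\rho_j^2}/s_i^*$, where $\rho_j$ is the perpendicular distance from $(x_j,y_j)$ to the line. Writing $w_j := x_j\cos\theta_i^* - y_j\sin\theta_i^*$ (known after the first step) and $p := x_{oi}^*\cos\theta_i^* - y_{oi}^*\sin\theta_i^*$, one has $\rho_j = |w_j - p|$, so squaring gives $R^2 - (w_j-p)^2 = s_i^{*2}\Delta_j^2/4$. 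Introducing $A := R^2 - p^2$ and $B := 2p$ linearizes these into $A + w_j B = s_i^{*2}\Delta_j^2/4 + w_j^2$; any two sensors with $w_1 \neq w_2$ then determine $(A,B)$, hence $p = B/2$ and $R = \sqrt{A+p^2}$. Non-collinearity already forces the $w_j$ not to be all equal (equal $w_j$ would place the three centers on a line parallel to the motion), so such a pair exists with probability one. Finally $p$ and $q$ form the system $x_{oi}^*\cos\theta_i^* - y_{oi}^*\sin\theta_i^* = p$ and $x_{oi}^*\sin\theta_i^* + y_{oi}^*\cos\theta_i^* = q$, whose matrix is a rotation and therefore always invertible, giving $(x_{oi}^*,y_{oi}^*)$ uniquely.

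The step I expect to be the main obstacle is the second one: recognizing that the midpoints $t_{ij}^*$ alone cannot fix the perpendicular placement of the trajectory, and that the entry/exit gaps encode exactly this offset together with the unknown radius, then showing that the resulting quadratic relations linearize cleanly and admit a unique solution even though $R$ (equivalently $\lambda_s^*$) is unknown. The remaining genericity claims -- non-collinearity of the centers and distinctness of the projections $w_j$ -- are the probability-one ingredients and reduce to the fact that a continuous i.i.d.\ law assigns zero mass to any fixed lower-dimensional algebraic set.
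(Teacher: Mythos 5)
Your proposal is correct and follows essentially the same route as the paper's proof: the times of shortest approach $(t_{ij1}^*+t_{ij2}^*)/2$ give a linear system determining $\sin\theta_i^*/s_i^*$, $\cos\theta_i^*/s_i^*$ and the along-track component of $(x_{oi}^*,y_{oi}^*)$, while the entry/exit gaps combined through the common unknown detection radius (exactly the paper's relation (\ref{temp1})) supply the cross-track component, with non-collinearity of the i.i.d.\ sensor centers furnishing the probability-one claim. The differences are presentational only: you work coordinate-free and recover the radius $R$ explicitly, whereas the paper normalizes $x_1=y_1=y_2=0$, derives closed-form expressions (\ref{sol:seven})--(\ref{sol:eight}), and eliminates the radius by differencing a pair of sensors.
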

\begin{proof}
Without loss of generality, suppose $x_1 =y_1 = y_2 =0$. Notice that this can be achieved by rotation and translation of the entire space without altering any of the measured signal because distance measurements are invariant under translation and rotation. Also note that the parameters $\{\theta_i^*, x_{oi}^*, y_{oi}^*\}$ are arbitrarily rotated with respect to $\{x_1,y_1,y_2\}$. $\{x_{oi}^*, y_{oi}^*\}$ are also translated. We however retain all notations to preserve clarity. Consequently, from (\ref{tstar}):
\bea \la{sol:one}
t_{i1}^* =-\frac{\sin{\theta_i^*}}{s_i^*}x_{oi}^* -\frac{\cos{\theta_i^*}}{s_i^*}y_{oi}^*+t_{oi}
\eea
\bea \la{sol:two}
t_{i2}^* =\frac{\sin{\theta_i^*}}{s_i^*}x_2 -t_{i1}^*
\eea
\bea \la{sol:three}
t_{i3}^* =\frac{\sin{\theta_i^*}}{s_i^*}x_3 + \frac{\cos{\theta_i^*}}{s_i^*}y_3-t_{i1}^*
\eea
Here, (\ref{sol:one}) has already been utilized to form (\ref{sol:two}) and (\ref{sol:three}). From (\ref{sol:two}) and (\ref{sol:three}):
\bea \la{sol:four}
s_i^* = \frac{\sin{\theta_i^*}}{t_{i1}^*+t_{i2}^*} x_2 \Rightarrow
\eea
\bea \la{sol:five}
t_{i3}^* &= (t_{i1}^*+t_{i2}^*) \left (\frac{x_3}{x_2} +\frac{1}{\tan{\theta_i^*}} \frac{y_3}{x_2}\right) -t_{i1}^* 
\eea
\bea \la{sol:six}
\Rightarrow \tan{\theta_i^*}  = \frac{y_3}{\frac{t_{i1}^*+t_{i3}^*}{t_{i1}^*+t_{i2}^*}x_2 - x_3}
\eea
Notice that since $\{x_2,x_3,y_3\} \neq 0$, there is no degenerate solution in (\ref{sol:six}). Consequently $\{\theta_i^*, s_i^*\}$ are uniquely determined as:
\bea \la{sol:seven}
\hat{\theta}_i^* =\tan^{-1}{\left( \frac{y_3}{\frac{t_{i1}^*+t_{i3}^*}{t_{i1}^*+t_{i2}^*}x_2 - x_3}\right)}
\eea
\bea \la{sol:eight}
\hat{s}_i^* =\frac{x_2}{t_{i1}^*+t_{t_i2}^*}
 \sin{
 \left(
 \tan^{-1}
 {\left( \frac{y_3}{\frac{t_{i1}^*+t_{i3}^*}{t_{i1}^*+t_{i2}^*}x_2 - x_3}\right)}
 \right)
 } 
\eea
Now, we move on to determe $\{x_{oi}, y_{oi}\}$.  Notice that (\ref{ltx}) can be rewritten devoid of explicit dependence on time as:
\bea \la{nltx}
(y-y_{oi})\sin{\theta_i^*} - (x- x_{oi} ) \cos{\theta_i^*}=0
\eea
The distance of shortest approach, can easily be shown, for each sensor to be: 
$$
(y_j-y_{oi})\sin{\theta_i^*} - (x_j- x_{oi} ) \cos{\theta_i^*}=0
$$
Since the maximum distant from which any of the sensors is triggered is the same, we can use any pair of sensor, to arrive at the following equation:
\bea \la{temp1}
\begin{split}
&((y_j-y_{oi})\sin{\theta_i^*} - (x_j- x_{oi} ) \cos{\theta_i^*})^2 \\
&+\frac{s_i^{*2}}{4} \left( t_{ij2}^*-t_{ij1}^*\right)^2 = \frac{s_i^{*2}}{4} \left( t_{ik2}^*-t_{ik1}^*\right)^2 \\
&((y_k-y_{oi})\sin{\theta_i^*} - (x_k- x_{oi} ) \cos{\theta_i^*})^2 
\end{split}
\eea
Further rearrangement of (\ref{temp1}) will lead to:
\bea
\begin{split} \nonumber
&x_{oi}\left(2 (x_k -x_j)\cos^2{\theta_i^*} -2 (y_k -y_j)\sin{\theta_i^*}\cos{\theta_i^*} \right)+\\
&y_{oi}\left(2 (y_k -y_j)\sin^2{\theta_i^*} -2 (x_k -x_j)\sin{\theta_i^*}\cos{\theta_i^*} \right)\\
&=\frac{s_i^{*2}}{4} \left((t_{ik2}-t_{ik1})^2-(t_{ij2}-t_{ij1})^2 \right) +\\
&(y_k \sin{\theta_i^*}-x_k \cos{\theta_i^*})^2-(y_j \sin{\theta_i^*}-x_j \cos{\theta_i^*})^2
\end{split}
\eea
Recall that by assumption, $\{x_1, y_1, y_2\}=0$. Hence, we can reduce (\ref{temp1}) further to:
\bea \la{sol1n}
\begin{split}
&2 x_2\cos^2{\theta_i^*} x_{oi}-2 x_2 \sin{\theta_i^*}\cos{\theta_i^*} y_{oi} =\\
&\frac{s_i^{*2}}{4} \left((t_{ik2}-t_{ik1})^2-(t_{ij2}-t_{ij1})^2 \right) + x_2^2 \cos^2{\theta_i^*}
\end{split}
\eea
Now, solving (\ref{sol1n})  concurrently with (\ref{sol:one}), a degenerate solution will arise if and only if:
$$
x_2 \cos{\theta_i^*} = 0
$$
Since $x_2 \neq 0$, we will have a degenerate solution if and only if $\theta_i^* \in \{\frac{\pi}{2}, \frac{3\pi}{2} \}$. Notice that we have rotated and translated the entire space to have a situation where $\{x_1, y_1, y_2\}=0$. Since $\{x_1, y_1, y_2\}=0$ are i.i.d. distributed, the probability that the angle of elevation after rotation about $\{x_1, y_1\}$ such that $y_2 =0$ is in $\{\frac{\pi}{2}, \frac{3\pi}{2} \}$ is zero. This concludes the proof that $\{\theta_i^*, s_i^*, x_{oi}^*, y_{oi}^*\}$ and by extension the line (\ref{ltx}) is uniquely determined with probability being equal to one.
\end{proof}
\subsection{Tracking of piece-wise parabolic trajectories under no noise conditions}

Suppose that a radiation source is on a parabola as defined in (\ref{ptx}) under no noise conditions. Suppose the radiation source triggers $n$ radiation sensors such that their time of shortest distance of approach $t_{ij}^*$, $i \in \{1,\dots,m\}, ~j \in \{1,\cdots,n\}$, is recorded and transmitted to a fusion center for processing. Also, suppose the locations of the binary radiation measurement sensors are known. The question being asked is whether we can uniquely determined the parameters of (\ref{ptx}) from the given information.

\begin{theorem} \la{th:onea}
Suppose a radiation source on a parabolic trajectory, as defined in (\ref{ptx}), triggers $n=6$ sensors such that:
\begin{itemize}
\item the times corresponding to the shortest distance of approach, $t_{ij}^*, j \in \{1,\cdots,6\}$, are available.
\item the locations of the centers of the radiation sensors are independent and identically distributed in $\mathbb{R}^2$.
\end{itemize}
 Then with probability being equal to one, the parabola (\ref{ptx}) is uniquely determined based on the $n=6$ measurement sensor point locations $(x_j, y_j), j \in \{1,\cdots,6\}$ and the times of shortest distance of approach, $t_{ij}^*, ~j \in  \{1,\cdots,6\}$,  information.
\end{theorem}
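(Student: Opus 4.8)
The plan is to linearize the closest-approach condition (\ref{tstar1}) by treating the relevant products of the unknowns as a single coefficient vector, to show that six generic sensors pin this vector down up to a scalar, and finally to fix that scalar with one algebraic relation the coefficients must satisfy. I take $t_{oi}$ as the (known) reference time, so that $\tau_j := t_{ij}^* - t_{oi}$ is known for every sensor (equivalently, one may choose the time origin so that $t_{oi}=0$ and read $\tau_j=t_{ij}^*$). Starting from the stationarity condition $\dot d_{ij}(t_{ij}^*)=0$ and clearing the factor $\gamma_i^*$ that was divided out to produce (\ref{tstar1}), I would expand
$$\bigl(x_{oi}^*+\alpha_i^*\tau_j-x_j\bigr)\alpha_i^* + \bigl(y_{oi}^*+\beta_i^*\tau_j+\tfrac12\gamma_i^*\tau_j^2-y_j\bigr)\bigl(\beta_i^*+\gamma_i^*\tau_j\bigr)=0$$
and collect terms against the \emph{known} data $\{\tau_j^3,\tau_j^2,\tau_j,\,y_j\tau_j,\,x_j,\,y_j,\,1\}$.

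Each resulting coefficient is a monomial in the five unknowns. Writing $P_1=\tfrac12\gamma_i^{*2}$, $P_2=\tfrac32\beta_i^*\gamma_i^*$, $P_3=\alpha_i^{*2}+\beta_i^{*2}+\gamma_i^*y_{oi}^*$, $P_4=-\gamma_i^*$, $P_5=-\alpha_i^*$, $P_6=-\beta_i^*$, $P_7=\alpha_i^*x_{oi}^*+\beta_i^*y_{oi}^*$, every sensor contributes the single \emph{homogeneous linear} equation $P_1\tau_j^3+P_2\tau_j^2+P_3\tau_j+P_4 y_j\tau_j+P_5 x_j+P_6 y_j+P_7=0$. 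Stacking the six sensors gives $Mp=0$ with $p=(P_1,\dots,P_7)^{\mathsf T}$ and $M\in\mathbb{R}^{6\times7}$ whose $j$-th row is $(\tau_j^3,\tau_j^2,\tau_j,y_j\tau_j,x_j,y_j,1)$. If $\operatorname{rank}M=6$ the null space is one-dimensional, $p=c\,p_0$ for a fixed $p_0=(P_{0,1},\dots,P_{0,7})^{\mathsf T}$ and an unknown scalar $c$. I then fix $c$ from the fact that the seven coefficients are not free: they satisfy $P_1=\tfrac12P_4^2$ (and, as a consistency check, $P_2=\tfrac32P_4P_6$). Substituting $p=c\,p_0$ into the degree-two relation gives $cP_{0,1}=\tfrac12c^2P_{0,4}^2$, whose unique nonzero root $c=2P_{0,1}/P_{0,4}^2$ is well defined because a genuine parabola has $\gamma_i^*\neq0$, hence $P_{0,4}\neq0$. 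Having fixed $p$, I recover $\gamma_i^*=-P_4$, $\alpha_i^*=-P_5$, $\beta_i^*=-P_6$, then $y_{oi}^*=(P_3-P_5^2-P_6^2)/(-P_4)$ and $x_{oi}^*=-(P_7+P_6y_{oi}^*)/P_5$, the last step using $\alpha_i^*\neq0$.

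The main obstacle is the rank claim, which is delicate precisely because $\tau_j$ is \emph{not} independent of $(x_j,y_j)$: it is the closest-approach time determined implicitly by the cubic (\ref{tstar1}) for sensor $j$, with Assumption \ref{ass:four} guaranteeing it is the unique relevant root. I would handle this exactly as in the proof of Theorem \ref{th:one}: each $6\times6$ minor of $M$ is a real-analytic function of the sensor coordinates through $\tau_j=\tau_j(x_j,y_j)$, so its zero set is either all of the configuration space or a set of Lebesgue measure zero. To exclude the first alternative it suffices to exhibit one configuration at which some minor is nonzero, or to add sensors one at a time and check that the row of a fresh generic sensor does not fall into the span of the previous rows. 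Since Assumption \ref{ass:two} draws the centers i.i.d.\ from a continuous zero-mean law, the exceptional measure-zero set is realized with probability zero, giving $\operatorname{rank}M=6$ almost surely.

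Putting these together yields a unique coefficient vector $p$, hence a unique $\{x_{oi}^*,y_{oi}^*,\alpha_i^*,\beta_i^*,\gamma_i^*\}$ and a uniquely determined parabola (\ref{ptx}) with probability one. The count of six is exactly what the argument needs: with fewer sensors the null space of $M$ has dimension at least two, and intersecting a two-dimensional space with the two available quadratic relations need not isolate a single ray, so uniqueness could fail; six sensors reduce the null space to one dimension, after which a single quadratic relation determines the scale and therefore the trajectory.
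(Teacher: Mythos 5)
Your reduction is algebraically correct and takes a genuinely different, projective route to the same linear algebra. The paper (after normalizing $x_1=y_1=y_2=0$ and $t_{oi}=0$) keeps equation (\ref{tstar1}) as written, i.e.\ divided through by $\gamma_i^*$, and solves the \emph{inhomogeneous} $6\times 6$ system $\mathbb{A}_o\mathbb{X}_o=\mathbb{Y}_o$, whose unknown vector has entries such as $\tfrac12\gamma_i^*$ and $-\alpha_i^*/\gamma_i^*$ and whose right-hand side is $y_j t_{ij}^*$. You instead clear $\gamma_i^*$, promote the coefficient of $y_j\tau_j$ to a seventh unknown, and obtain the homogeneous system $Mp=0$; your $M$ is exactly $[\mathbb{A}_o \mid -\mathbb{Y}_o]$ up to column permutation, and your $p$ is the paper's solution vector re-homogenized by $-P_4=\gamma_i^*$. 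Your scale-fixing step via the relation $P_1=\tfrac12 P_4^2$ is correct (the spurious root $c=0$ is excluded because $\gamma_i^*\neq 0$ forces $P_{0,4}\neq 0$), and your parameter recovery matches the paper's, including the unstated but necessary requirement $\alpha_i^*\neq 0$. What the projective form buys is a marginally weaker hypothesis: $\operatorname{rank} M=6$ is implied by, but does not require, $\det\mathbb{A}_o\neq 0$, so your uniqueness argument would survive configurations where $\mathbb{A}_o$ is singular yet $M$ keeps full row rank.

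The genuine gap is that you never establish the rank claim, and that claim is where essentially all of the mathematical content of the theorem sits; everything else in both proofs is bookkeeping. You correctly flag the difficulty (that $\tau_j$ is an implicit function of $(x_j,y_j)$ through the cubic), and you invoke the dichotomy that a real-analytic function vanishes either identically or on a Lebesgue-null set — but to use it you must (i) prove the minors of $M$ are real-analytic in the sensor coordinates, which requires showing $\tau_j(x_j,y_j)$ is analytic (implicit function theorem applied to (\ref{tstar1}) at a simple root; Assumption \ref{ass:four} gives uniqueness of the closest approach but you still must rule out a degenerate critical point), and (ii) exhibit at least one configuration at which some $6\times 6$ minor is nonzero, so that the ``vanishes identically'' branch is excluded. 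You name both requirements and carry out neither, so your proof stops exactly where the work begins. The paper discharges this step concretely: it factors the relevant determinant as $\Gamma$ times a bracketed expression in the $x_j$, $y_j$ and $\kappa_j$, so that degeneracy forces the single scalar constraint (\ref{sol:twoa1}), which confines $(x_6,y_6)$ to a one-dimensional locus given the other five sensors — an event of probability zero under the i.i.d.\ assumption. To complete your argument you would either have to carry out your step (ii), for which one explicit numerical configuration suffices, or import the paper's determinant factorization, at which point the projective detour, while clean, no longer saves any labor.
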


\begin{proof}
Without loss of generality, suppose $x_1 =y_1 = y_2 =0$. Again, we retain all notations, to preserve clarity. Also consider $t_{oi} =0$. Consequently, from (\ref{tstar1}):
\bea \la{sol:onea}
&\frac{1}{2} \gamma_i^*  t_{ij}^{*3}  +\frac{3}{2}  \beta_i^*  t_{ij}^{*2} +\left (\frac{\alpha_i^{*2}+\beta_i^{*2}}{ \gamma_i^*}+y_{oi}^*\right) t_{i1}^{*} \nonumber \\
&+\frac{\alpha_i^*}{ \gamma_i^*} (x_{oi}^* -x_j)  +\frac{\beta_i^*}{ \gamma_i^*} (y_{oi}^* -y_j)  = y_j t_{ij}^*
\eea
$\forall j \in \{1,\cdots, n\}.$
In matrix form, (\ref{sol:onea}) becomes:
{\scriptsize $$\underbrace{\begin{bmatrix}
       1 & t_{i1}^{3*}& t_{i1}^{2*} & t_{i1}^{*}   & 0 & 0       \\[0.3em]
       1 & t_{i2}^{3*}& t_{i2}^{2*} & t_{i2}^{*}   & x_2 & 0      \\[0.3em]
       1 & t_{i3}^{3*}& t_{i3}^{2*} & t_{i3}^{*}   & x_3 & y_3      \\[0.3em]
       1 & t_{i4}^{3*}& t_{i4}^{2*} & t_{i4}^{*}   & x_4 & y_4      \\[0.3em]
       1 & t_{i5}^{3*}& t_{i5}^{2*} & t_{i5}^{*}   & x_5 & y_5      \\[0.3em]
       1 & t_{i6}^{3*}& t_{i6}^{2*} & t_{i6}^{*}   & x_6 & y_6      \\[0.3em]
     \end{bmatrix}}_{\mathbb{A}_o}
\underbrace{\begin{bmatrix}
      \frac{\alpha_i^*}{\gamma_i^*} x_{oi}^* + \frac{\beta_i^*}{\gamma_i^*} y_{oi}^*        \\[0.2em]
       \frac{1}{2} \gamma_i^*        \\[0.2em]
       \frac{3}{2} \beta_i^*         \\[0.2em]
      \frac{\alpha_i^{*2}+\beta_i^{*2}}{\gamma_i^*} + y_{oi}^*        \\[0.2em]
        -\frac{\alpha_i^*}{\gamma_i^*}      \\[0.2em]
        -\frac{\beta_i^*}{\gamma_i^*}       \\[0.2em]
     \end{bmatrix}}_{\mathbb{X}_o}
= \begin{bmatrix}
       0        \\[0.2em]
       0        \\[0.2em]
       y_3 t_{i3}^*       \\[0.2em]
     y_4 t_{i4}^*        \\[0.2em]
       y_3 t_{i3}^*      \\[0.2em]
       y_5 t_{i5}^*           \\[0.2em]
       y_6 t_{i6}^*           \\[0.2em]
     \end{bmatrix}$$}
With some algebraic manipulations, it can be shown that:

{\scriptsize 
$$\Gamma
\left[ 
\left(\frac{x_5}{\kappa_{5}}-\frac{x_4}{\kappa_{4}}\right)
\left(\frac{y_6}{\kappa_{6}}-\frac{y_4}{\kappa_{4}}\right)-
\left(\frac{x_6}{\kappa_{6}}-\frac{x_4}{\kappa_{4}}\right)\left(\frac{y_5}{\kappa_{5}}-\frac{y_4}{\kappa_{4}}\right) 
\right]$$
}
where $\kappa_{m} = \prod\limits_{k=1}^3 (t_{im}-t_{ik})$ and $\Gamma =\prod\limits_{1 \le j \le  3  \atop 4 \leq k \leq 6 } (t_{ik}^* -t_{ij}^*) $. This means for a degenerate solution:
\bea \la{sol:twoa1}
\frac{y_6 x_5}{\kappa_{(5,6)}}-\frac{y_6 x_4}{\kappa_{(4,6)}}- \frac{x_6 y_5}{\kappa_{(5,6)}}-\frac{x_6 y_4}{\kappa_{(4,6)}} = \frac{x_5 y_4 -x_4 y_5}{\kappa_{(5,6)}}
\eea
where $\kappa_{(m,n)} = \prod\limits_{k=1 \atop j \in \{m,n\}}^3 (t_{ij}-t_{ik})$. 
Now, suppose we know $\{x_j, y_j\}, j \in \{1,\cdots,5\}$, $\{x_6, y_6\}$ will have to lie on a one-dimensional hyperplane in a two-dimensional probability space. Since $\{x_6, y_6\}$ are independent and identically distributed, with probability 0, (\ref{sol:twoa1}) will not be realized meaning with probability being one, the parabolic trajectory is uniquely determined from (\ref{sol:twoa1}). Notice that from $\mathbb{X}_0$, all the parameters of the parabola are uniquely determined. 
\end{proof}

\section{Robust and scalable numerical solution to tracking of radiation sources on piece-wise linear and parabolic trajectories} \label{sec:num}
Up till this point, we have been working with the expectations of the received signals at the radiation sensors. We will now consider the practical scenario with noise in the observed arrival times of the photons. 

\subsection{Scalable tracking of radiation source on piece-wise linear trajectories} \label{subsec:num1}

From Theorem \ref{th:one}, we have shown that given three generically placed radiation sensors such that they are non-coincident, we can track a linear trajectory precisely with probability being 1 if the sensor measurements are noise free. In this section, we will present a robust and scalable algorithm  in terms of noisy measurement and number of sensors being used respectively. Recall that, other results on tracking with binary sensors on piece-wise linear trajectories required $2^{n-1}$ least squares problems and comparisons to come up with the true solution\cite{bai} . In our approach, we will show that only two least squares problems are required for all $n \ge 3$. Observe that (\ref{tstar}) can be written in matrix form as:

{\scriptsize
\bea \la{sol:threea}
\underbrace{\begin{bmatrix}
       x_1 & y_1 & 1     \\[0.3em]
       x_2 & y_2 & 1     \\[0.3em]
       \vdots & \vdots & \vdots \\[0.3em]
       x_n & y_n & 1     \\[0.3em]
     \end{bmatrix}}_{\mathbb{A}_1} 
\underbrace{\begin{bmatrix}
       \frac{\sin{\theta_i^*}}{s_i^*}  \\[0.2em]\\
       \frac{\cos{\theta_i^*}}{s_i^*}   \\[0.2em]
        -\frac{1}{s_i^*}(\sin{\theta_i^*} x_{oi}^* + \cos{\theta_i^*} y_{oi}^*)  \\[0.2em]
     \end{bmatrix} }_{\mathbb{X}_1}
= \underbrace{\begin{bmatrix}
      t_{i1}^*-t_{oi}        \\[0.2em]
      t_{i2}^*-t_{oi}        \\[0.2em]
       \vdots       \\[0.2em]
       t_{in}^*-t_{oi}       \\[0.2em]
     \end{bmatrix}}_{\mathbb{Y}_1}
\eea
}

The least squares solution can be used to form:
\bea \la{sol:threeb}
\mathbb{X}_1 =(\mathbb{A}_1^T \mathbb{A}_1)^{-1} \mathbb{A}_1^T \mathbb{Y}_1
\eea
and note that:
\bea \la{sol:threec}
\hat{s}_i^* =\sqrt{\mathbb{X}_1^2 (1) +\mathbb{X}_1^2 (2)} 
\eea
\bea \la{sol:threed}
\hat{\theta}_i^* =\arctan{\left(\frac{\mathbb{X}_1 (1)}{\mathbb{X}_1 (2)}\right)}
\eea
 From (\ref{temp1}), we can formulate:
\bea \la{sol:foura}
\begin{split}
\underbrace{\begin{bmatrix}
       \alpha_{2 (1,2)} & \beta_{2(1,2)}     \\[0.3em]
      \vdots & \vdots    \\[0.3em]
      \alpha_{2 (1,n)} & \beta_{2(1,n)}     \\[0.3em]
      -\frac{\sin{\hat{\theta}_i^*}}{\hat{s}_i^*} & -\frac{\cos{\hat{\theta}_i^*}}{\hat{s}_i^*}     \\[0.3em]
           \end{bmatrix}}_{\mathbb{A}_2} 
\underbrace{\begin{bmatrix}
      x_{oi}  \\[0.2em]\\
      y_{oi}   \\[0.2em]
     \end{bmatrix} }_{\mathbb{X}_2}
= 
\underbrace{\begin{bmatrix}
    \gamma_{2(1,2)}     \\[0.2em]
       \vdots       \\[0.2em]
       \gamma_{2(1,n)}     \\[0.2em]
	\mathbb{X}_1 (3) \\[0.2em]
     \end{bmatrix}}_{\mathbb{Y}_2}
\end{split}
\eea
where 
\bea \nonumber
\begin{split}
&\alpha_{2(k,j)} = 2 (x_k -x_j)\cos^2{\hat{\theta}_i^*} -2 (y_k -y_j)\sin{\hat{\theta}_i^*}\cos{\hat{\theta}_i^*} \\
&\beta_{2(k,j)}  = 2 (y_k -y_j)\sin^2{\hat{\theta}_i^*} -2 (x_k -x_j)\sin{\hat{\theta}_i^*}\cos{\hat{\theta}_i^*}\\
&\gamma_{2(k,j)} = \frac{\hat{s}_i^{*2}}{4} \left((t_{ik2}-t_{ik1})^2-(t_{ij2}-t_{ij1})^2 \right) +\\
& (y_k \sin{\hat{\theta}_i^*}-x_k \cos{\hat{\theta}_i^*})^2-(y_j \sin{\hat{\theta}_i^*}-x_j \cos{\hat{\theta}_i^*})^2
\end{split}
\eea
The least squares solution then becomes:
\bea \la{sol:fourb}
\mathbb{X}_2 =(\mathbb{A}_2^T \mathbb{A}_2)^{-1} \mathbb{A}_2^T \mathbb{Y}_2.
\eea
and note that the estimates:
$\hat{x}_{oi} =\mathbb{X}_2 (1)$, and 
$\hat{y}_{oi} =\mathbb{X}_2 (2)$.
Notice that we have used the least squares exactly twice. 
\subsection{Noise in recorded times for radiation source on piece-wise linear trajectories}\la{subsec:num2}

We now consider the robustness with reference to the presence of noise. Recall that the arrival time  of each photon is at each sensor is exponentially distributed. Now suppose the following proposition holds in the noisy case:
\begin{proposition} \label{noise1}
The recorded time $t_{ijk}-t_{oi}, k \in \{1,2\}$ corresponding to the earliest and latest sensing by the  $j^{th}$ sensor and the $i^{th}$ piece of the linear trajectory obeys:
$t_{ijk}-t_{oi} \sim {Erlang}\left(\lambda_T, \frac{\lambda_T}{t_{ijk}^*-t_{oi}}\right)$.
\end{proposition}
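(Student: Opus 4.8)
The plan is to recognize the recorded trigger time as the waiting time until a fixed photon count is reached, and to exploit the elementary fact that a sum of independent exponential inter-arrival times is Erlang distributed. First I would reinterpret the detection rule (\ref{tinit})--(\ref{tfinal}): multiplying through by $\lambda_s^*$ and folding in the constant background term $\lambda_b$ integrated over the unit-width window, the firing condition becomes $\int_{t-1/2}^{t+1/2} z_{ij}(\tau)\,d\tau \ge \lambda_T$, i.e.\ the \emph{count} threshold $\lambda_T$ is the effective number of photons required to toggle the sensor. Thus, in the noise-free description, $t_{ijk}^*-t_{oi}$ is precisely the expected time at which the $\lambda_T$-th photon is registered by sensor $j$, measured from the start of the $i$-th segment.

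Next I would introduce the stochastic layer. Since the photons register according to a Poisson arrival process, the inter-arrival times are independent exponential random variables, and the time to accumulate the prescribed count $\lambda_T$ is the sum of $\lambda_T$ such exponentials; by definition this sum is Erlang with shape parameter $\lambda_T$. It then remains only to pin down the rate, which I would fix by a moment-matching/consistency argument: the mean of an $\mathrm{Erlang}(\lambda_T,\mu)$ variable is $\lambda_T/\mu$, and this must coincide with the noise-free trigger time $t_{ijk}^*-t_{oi}$, since by construction $t_{ijk}^*$ in (\ref{tinit})--(\ref{tstar}) is defined through the \emph{expectation} (\ref{nf}) of the received signal. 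Solving $\lambda_T/\mu = t_{ijk}^*-t_{oi}$ yields $\mu = \lambda_T/(t_{ijk}^*-t_{oi})$, which is exactly the claimed rate, so $t_{ijk}-t_{oi}\sim\mathrm{Erlang}\!\left(\lambda_T,\,\lambda_T/(t_{ijk}^*-t_{oi})\right)$. As a by-product the variance is $(t_{ijk}^*-t_{oi})^2/\lambda_T$, confirming the ``squared proportion'' error scaling promised in the introduction.

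The main obstacle is that the arrival rate is not actually constant: the true rate $z_{ij}(t)$ in (\ref{nf}) varies in time because the source-to-sensor distance $d_{ij}(t)$ changes as the source moves, so the registration process is an \emph{inhomogeneous} Poisson process, whereas the clean Erlang form holds strictly only for a homogeneous one. I would therefore argue that over the comparatively short accumulation interval near the boundary of the detection range the rate is effectively constant, or equivalently replace the inhomogeneous process by a homogeneous surrogate with the effective rate $\mu$ derived above, which reproduces the correct expected trigger time by construction. A secondary point to reconcile is the window-integral rule of (\ref{tinit})--(\ref{tfinal}) against the cumulative-count-to-threshold picture used here; I would justify the identification using the unimodality hypothesis (Assumption \ref{ass:four}), which makes the first and last window-threshold crossings well defined and symmetric about the peak time $t_{ij}^*$. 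By this time-reversal symmetry about the peak, both the earliest crossing $t_{ij1}$ and the latest crossing $t_{ij2}$ are governed by the accumulation of the same threshold count $\lambda_T$, giving the stated Erlang law for each $k\in\{1,2\}$.
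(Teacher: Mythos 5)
Your proposal is correct at the same heuristic level of rigor as the paper's own proof and follows essentially the same route: both identify the trigger with the accumulation of $\lambda_T$ Poisson photon arrivals, invoke the fact that a sum of $\lambda_T$ i.i.d. exponential times is Erlang of shape $\lambda_T$, and pin the rate to $\lambda_T/(t_{ijk}^*-t_{oi})$ by forcing the mean to equal the noise-free trigger time. The only real difference is bookkeeping --- the paper averages the $\lambda_T$ arrival epochs inside the unit triggering window (positing that their sample mean approximates $t_{ijk}^*-t_{oi}$ and that they are approximately exponential), whereas you sum inter-arrival gaps of a moment-matched homogeneous surrogate process --- and your write-up is, if anything, more explicit about the gaps (inhomogeneity of the rate, windowed versus cumulative count) that the paper's ``it is fairly easy to show'' glosses over.
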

\begin{proof}
Suppose that the $\lambda_T$ arrival times in the interval corresponding to $t_{ijk}$ are denoted $ \tau_{ijk}^1 < \tau_{ijk}^2< \cdots < \tau_{ijk}^{\lambda_T}$, $\tau_{ijk}^{\lambda_T}-\tau_{ijk}^{1} \leq 1$, where the superscripts are just for labelling purposes. Suppose $\frac{1}{\lambda_T} \sum\limits_{q=1}^{\lambda_T} \tau_{ijk}^q -t_{oi}\approx t_{ijk}^*-t_{oi}$. Notice that the $\tau_{ijk}^q$ poisson arrival times are exponentially distributed with approximate rate parameter $\lambda_T$. Subsequently, it is fairly easy to show that $t_{ijk} -t_{oi}  \sim  Erlang\left(\lambda_T,\frac{\lambda_T}{t_{ijk}^*-t_{oi}}\right)$.
\end{proof}
\begin{corollary}
The recorded time of shortest approach, $t_{ij}^*-t_{oi}$ obeys:
$t_{ij}^*-t_{oi} \sim {Erlang}\left(2\lambda_T, \frac{2 \lambda_T}{t_{ij}^*-t_{oi}}\right)$.
\end{corollary}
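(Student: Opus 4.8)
The plan is to recognize the recorded time of shortest approach as a scaled sum of the two recorded transition times and then exploit the additivity of the Erlang family. From (\ref{tstar}) the time of shortest approach is the midpoint of the two transition times, so in the noisy case the recorded shortest-approach offset (which I write $t_{ij}-t_{oi}$, reserving the star for the true value as in Proposition \ref{noise1}) satisfies
\begin{equation*}
(t_{ij}-t_{oi}) = \tfrac{1}{2}\big[(t_{ij1}-t_{oi}) + (t_{ij2}-t_{oi})\big],
\end{equation*}
where the two bracketed terms are generated by the disjoint photon streams at entry and at exit and may therefore be taken independent.

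Next I would apply two elementary properties of the Erlang (Gamma) family. First, positive scaling preserves the shape and divides the rate: if $X \sim \mathrm{Erlang}(\lambda_T,\mu)$ then $\tfrac{1}{2}X \sim \mathrm{Erlang}(\lambda_T,2\mu)$. Combining this with Proposition \ref{noise1} gives
\begin{align*}
\tfrac{1}{2}(t_{ij1}-t_{oi}) &\sim \mathrm{Erlang}\!\left(\lambda_T,\tfrac{2\lambda_T}{t_{ij1}^*-t_{oi}}\right),\\
\tfrac{1}{2}(t_{ij2}-t_{oi}) &\sim \mathrm{Erlang}\!\left(\lambda_T,\tfrac{2\lambda_T}{t_{ij2}^*-t_{oi}}\right).
\end{align*}
Second, independent Erlang variables sharing a common rate add their shape parameters, so two shape-$\lambda_T$ terms with a common rate combine into a single shape-$2\lambda_T$ term.

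To align the summands with that hypothesis I would invoke the approximation $t_{ij1}^*-t_{oi}\approx t_{ij2}^*-t_{oi}\approx t_{ij}^*-t_{oi}$, which is justified whenever the dwell time inside the detection range is short relative to the elapsed travel time $t_{ij}^*-t_{oi}$ (equivalently, the midpoint $t_{ij}^*$ sits close to both endpoints). Under this approximation both summand rates collapse to $\tfrac{2\lambda_T}{t_{ij}^*-t_{oi}}$, the shapes add to $2\lambda_T$, and the sum is $\mathrm{Erlang}\!\left(2\lambda_T,\tfrac{2\lambda_T}{t_{ij}^*-t_{oi}}\right)$, as claimed. A moment check supports this: the mean is $t_{ij}^*-t_{oi}$ and the variance is $\tfrac{(t_{ij}^*-t_{oi})^2}{2\lambda_T}$, consistent with the squared-proportion error scaling posited in the model.

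The main obstacle is precisely this rate mismatch: without the approximation the two summands carry distinct rates $\tfrac{2\lambda_T}{t_{ij1}^*-t_{oi}}$ and $\tfrac{2\lambda_T}{t_{ij2}^*-t_{oi}}$, and their exact convolution is a two-parameter hypoexponential (generalized Erlang) rather than a genuine Erlang; the shapes add cleanly only when the scales coincide, i.e. when $t_{ij1}^*=t_{ij2}^*$. I would therefore state the short-dwell approximation explicitly and, if a sharper justification is wanted, fall back on a first-two-moment match, treating the stated Erlang as the moment-matched surrogate, mirroring the same ``$\approx$'' device already used in the proof of Proposition \ref{noise1}.
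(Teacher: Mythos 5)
Your proposal is correct and follows essentially the route the paper intends: the corollary is stated there without proof, as an immediate consequence of Proposition \ref{noise1} via the midpoint relation (\ref{tstar}) and the additivity of independent equal-rate Erlang variables, which is exactly your decomposition. Your explicit flagging of the rate mismatch between the entry and exit summands, and the short-dwell approximation (or moment matching) needed to collapse them to a common rate, is a caveat the paper silently absorbs into the same ``$\approx$'' device used in proving Proposition \ref{noise1}, so it sharpens rather than diverges from the intended argument.
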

Consequent to proposition \ref{noise1}, the equations for estimation of the linear trajectory has to be updated. From (\ref{sol:threea}) - (\ref{sol:threeb}) the estimation error in $\{s_i^*, \theta_i^* \}$ can be characterized as:
\bea \la{sol:fivea}
(\mathbb{X}_1^* - \mathbb{X}_1) =(\mathbb{A}_1^T \mathbb{A}_1)^{-1} \mathbb{A}_1^T  \underbrace{\begin{bmatrix}
      t_{i1}^*-t_{i1}      \\[0.2em]
       \vdots       \\[0.2em]
       t_{in}^*-t_{ik}      \\[0.2em]
     \end{bmatrix}}_{\Delta \mathbb{Y}_1}
\eea
From (\ref{sol:fivea}), the following lemma arises:
\begin{lemma} \label{lem:fivea}
Under Assumption \ref{noise1} and (\ref{sol:fivea})
$$\lim_{k\to\infty} (\mathbb{X}_1^* - \mathbb{X}_1)  = \Delta \mathbb{X}_1\rightarrow [0 ~  0 ~  0]^T.$$
\end{lemma}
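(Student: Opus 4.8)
The plan is to read (\ref{sol:fivea}) as a fixed linear transformation of the random timing-error vector, namely $\Delta\mathbb{X}_1 = M\,\Delta\mathbb{Y}_1$ with $M = (\mathbb{A}_1^T \mathbb{A}_1)^{-1}\mathbb{A}_1^T$, and then to show that the error source $\Delta\mathbb{Y}_1$ vanishes as the sensing threshold grows. I read the limiting index as $\lambda_T \to \infty$: accumulating more photons before declaring a transition sharpens each recorded time, so the natural ``large-data'' parameter here is the count threshold $\lambda_T$ rather than a literal subscript $k$. Because the sensor coordinates $\{x_j,y_j\}$ are fixed and known, $\mathbb{A}_1$ and hence $M$ are deterministic; the randomness lives entirely in $\Delta\mathbb{Y}_1$, whose $j$-th entry is the timing error $t_{ij}^* - t_{ij}$ of the time of shortest approach.

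First I would establish that $\Delta\mathbb{Y}_1$ is mean-zero with a variance that collapses as $\lambda_T \to \infty$. By the Corollary following Proposition \ref{noise1}, $t_{ij} - t_{oi} \sim \text{Erlang}(2\lambda_T, \tfrac{2\lambda_T}{t_{ij}^*-t_{oi}})$, so using the Erlang mean $k/\lambda$ and variance $k/\lambda^2$ I obtain $\mathbb{E}[t_{ij}-t_{oi}] = t_{ij}^*-t_{oi}$ and $\text{Var}(t_{ij}-t_{oi}) = (t_{ij}^*-t_{oi})^2/(2\lambda_T)$. Hence each component of $\Delta\mathbb{Y}_1$ has zero mean (the recorded time is unbiased) and variance $(t_{ij}^*-t_{oi})^2/(2\lambda_T)\to 0$; treating the per-sensor measurements as independent, $\text{Cov}(\Delta\mathbb{Y}_1) = \tfrac{1}{2\lambda_T}\,\text{diag}\big((t_{ij}^*-t_{oi})^2\big)$.

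Next I would push this through the fixed map $M$. By Theorem \ref{th:one}, generic (i.i.d.) placement makes $\mathbb{A}_1$ full column rank with probability one, so $(\mathbb{A}_1^T\mathbb{A}_1)^{-1}$ exists and $M$ is a bounded deterministic matrix. Linearity then gives $\mathbb{E}[\Delta\mathbb{X}_1] = M\,\mathbb{E}[\Delta\mathbb{Y}_1] = 0$ and $\text{Cov}(\Delta\mathbb{X}_1) = M\,\text{Cov}(\Delta\mathbb{Y}_1)\,M^T = \tfrac{1}{2\lambda_T}\,M\,\text{diag}\big((t_{ij}^*-t_{oi})^2\big)\,M^T$, which tends to the zero matrix as $\lambda_T\to\infty$. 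Thus $\mathbb{E}[\|\Delta\mathbb{X}_1\|^2]\to 0$, i.e. $\Delta\mathbb{X}_1 \to [0\ 0\ 0]^T$ in mean square, and by Chebyshev also in probability; this is exactly the asserted consistency of $\hat{s}_i^*$ and $\hat{\theta}_i^*$.

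The main obstacle is not the algebra but pinning down the mode of convergence and honoring the approximation behind the noise model. The ``$\to 0$'' must be read probabilistically (mean square / in probability), since $\Delta\mathbb{X}_1$ is random for every finite $\lambda_T$; upgrading to almost-sure convergence would require a Borel--Cantelli / summability argument that the $\lambda_T^{-1}$ variance decay does support but which I would state separately. I would also flag that Proposition \ref{noise1} itself rests on the approximation $\tfrac{1}{\lambda_T}\sum_q \tau_{ijk}^q - t_{oi} \approx t_{ijk}^* - t_{oi}$, so the unbiasedness is itself asymptotic in $\lambda_T$; and that the boundedness of $M$ --- hence the whole argument --- hinges on the non-degenerate geometry guaranteed only with probability one by Theorem \ref{th:one}.
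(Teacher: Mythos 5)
Your argument is internally sound, but it proves a different limit from the one the paper's proof establishes, because you have misidentified the asymptotic variable. In the paper, $k$ is the number of sensors, not the photon threshold $\lambda_T$: the paper's proof replaces $(\mathbb{A}_1^T\mathbb{A}_1)^{-1}$ by $\frac{1}{k}\,\mathrm{diag}\left\{ \frac{1}{\sigma_x^2},\frac{1}{\sigma_y^2},1\right\}$ --- a law-of-large-numbers statement for $\frac{1}{k}\mathbb{A}_1^T\mathbb{A}_1$ that only makes sense as the number of rows of $\mathbb{A}_1$ grows --- and then kills the remaining factor $\frac{1}{k}\mathbb{A}_1^T\Delta\mathbb{Y}_1$ by appealing to Assumption \ref{ass:two}, i.e.\ that the sensor coordinates $\{x_j,y_j\}$ are i.i.d.\ and zero mean, so that the sample average of $(x_j\delta_j,\,y_j\delta_j,\,\delta_j)^T$, with $\delta_j=t_{ij}^*-t_{ij}$, concentrates at its zero expectation. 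The telltale sign is that your proof never uses the zero-mean, i.i.d.\ placement of the sensors: that assumption is the engine of the paper's argument, and the surrounding section (and simulations, which vary $n\in\{20,50,500\}$) is about scalability in sensor count. In the paper's regime --- $\lambda_T$ fixed, $k=n\to\infty$ --- your argument fails at its first move: each entry of $\Delta\mathbb{Y}_1$ retains variance $(t_{ij}^*-t_{oi})^2/(2\lambda_T)$ bounded away from zero, and $M=(\mathbb{A}_1^T\mathbb{A}_1)^{-1}\mathbb{A}_1^T$ is not a fixed bounded matrix but changes dimension with $n$; the error vanishes not because individual measurements sharpen but because an average of many zero-mean terms concentrates.

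What you did prove is a correct, complementary high-SNR statement: for a fixed generic configuration (full column rank w.p.\ 1, as in Theorem \ref{th:one}), the least-squares error tends to zero in mean square as $\lambda_T\to\infty$, with covariance $\frac{1}{2\lambda_T}M\,\mathrm{diag}\{(t_{ij}^*-t_{oi})^2\}M^T$. Your Erlang computations from Proposition \ref{noise1} (unbiasedness and variance $(t_{ij}^*-t_{oi})^2/(2\lambda_T)$) are also an ingredient the paper's own proof tacitly needs --- its third component $\frac{1}{k}\sum_j\delta_j$ vanishes only because the timing errors are zero mean --- and your care about the mode of convergence is better than the paper's, which leaves it unspecified. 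But to make your argument prove the paper's lemma you must recast it as a law of large numbers in $k$: show $\frac{1}{k}\mathbb{A}_1^T\mathbb{A}_1\to\mathrm{diag}\{\sigma_x^2,\sigma_y^2,1\}$ and $\frac{1}{k}\mathbb{A}_1^T\Delta\mathbb{Y}_1\to\mathbb{E}\left[(x_j\delta_j,\,y_j\delta_j,\,\delta_j)^T\right]=0$, using independence of locations and timing errors together with both zero-mean properties. One further caveat in your $\lambda_T\to\infty$ reading: $\lambda_T$ implicitly sets the detection radius, so the $t_{ijk}^*$ themselves move with $\lambda_T$ unless the source intensity is co-scaled (as the paper does in its simulations); treating $t_{ij}^*$ as fixed silently assumes that co-scaling.
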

\begin{proof}
\bea \nonumber \lim_{k\to\infty}  \Delta \mathbb{X}_1\rightarrow {\text diag} \left \{ \frac{1}{\sigma_x^2}, \frac{1}{\sigma_y^2}, 1 \right\} \frac{1}{k}\mathbb{A}_1^T  \underbrace{\begin{bmatrix}
      t_{i1}^*-t_{i1}      \\[0.2em]
       \vdots       \\[0.2em]
       t_{in}^*-t_{ik}      \\[0.2em]
     \end{bmatrix}}_{\Delta \mathbb{Y}_1}
\eea
Since the $\{x_j, y_j\}$ elements of $\mathbb{A}_1^T $ are i.i.d. and zero mean, the result follows.
\end{proof}
\begin{corollary} \label{cor:fivea}
Under Assumption \ref{noise1} and (\ref{sol:fivea})
$$\lim_{k\to\infty} [s_i^* -\hat{s}_i, \theta_i^*-\hat{\theta}_i ]^T \rightarrow [0 ~  0 ]^T.$$
\end{corollary}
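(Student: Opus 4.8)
The plan is to obtain Corollary \ref{cor:fivea} from Lemma \ref{lem:fivea} purely by continuity, since the physical parameters $\hat{s}_i$ and $\hat{\theta}_i$ are read off from the entries of $\mathbb{X}_1$ through the two fixed maps (\ref{sol:threec}) and (\ref{sol:threed}), while the true parameters $s_i^*,\theta_i^*$ are exactly the images of the true vector $\mathbb{X}_1^*$ under those same maps. Lemma \ref{lem:fivea} already supplies $\mathbb{X}_1 \to \mathbb{X}_1^*$ entrywise (in the same $k\to\infty$ limit); in particular the first two coordinates converge, $\mathbb{X}_1(1) \to \mathbb{X}_1^*(1) = \tfrac{\sin\theta_i^*}{s_i^*}$ and $\mathbb{X}_1(2) \to \mathbb{X}_1^*(2) = \tfrac{\cos\theta_i^*}{s_i^*}$. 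So all that remains is to push this convergence through (\ref{sol:threec})--(\ref{sol:threed}).

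First I would write the two coordinate-to-parameter maps, $g_1(a,b)$ given by (\ref{sol:threec}) and $g_2(a,b) = \arctan(a/b)$ given by (\ref{sol:threed}), so that $\hat{s}_i = g_1(\mathbb{X}_1(1),\mathbb{X}_1(2))$ and $\hat{\theta}_i = g_2(\mathbb{X}_1(1),\mathbb{X}_1(2))$, while $s_i^* = g_1(\mathbb{X}_1^*(1),\mathbb{X}_1^*(2))$ and $\theta_i^* = g_2(\mathbb{X}_1^*(1),\mathbb{X}_1^*(2))$. Then I would verify that both maps are continuous at the limit point $(\mathbb{X}_1^*(1),\mathbb{X}_1^*(2))$. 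For $g_1$ this is immediate because $a^2+b^2 = 1/s_i^{*2} > 0$, i.e. the argument stays away from the origin whenever the speed is finite and nonzero. For $g_2$ continuity requires $b = \cos\theta_i^*/s_i^* \neq 0$, i.e. $\theta_i^* \notin \{\tfrac{\pi}{2},\tfrac{3\pi}{2}\}$, which is precisely the non-degeneracy event already shown to occur with probability one in Theorem \ref{th:one}.

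Finally, on that probability-one event I would invoke the continuous mapping theorem: composing the convergence $\mathbb{X}_1 \to \mathbb{X}_1^*$ from Lemma \ref{lem:fivea} with the continuity of $g_1,g_2$ at the true point yields $\hat{s}_i \to s_i^*$ and $\hat{\theta}_i \to \theta_i^*$, hence $[s_i^*-\hat{s}_i,\ \theta_i^*-\hat{\theta}_i]^T \to [0,\ 0]^T$, which is the claim. The one genuinely delicate point is the degenerate heading $\theta_i^* \in \{\tfrac{\pi}{2},\tfrac{3\pi}{2}\}$, where the second coordinate $\cos\theta_i^*/s_i^*$ vanishes, $g_2$ loses continuity, and the branch of the arctangent becomes ambiguous; I would dispatch this exactly as in Theorem \ref{th:one}, by noting that after the i.i.d. rotation/translation this event has probability zero, so the continuity argument applies almost surely. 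I would also be explicit that the resulting convergence inherits the stochastic mode (consistency as the sensor count grows) of Lemma \ref{lem:fivea} rather than being a deterministic limit.
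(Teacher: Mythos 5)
Your proposal is correct and takes essentially the same route as the paper: the paper states Corollary \ref{cor:fivea} without a separate proof, as an immediate consequence of Lemma \ref{lem:fivea}, the intended justification being precisely the continuity of the parameter-extraction maps (\ref{sol:threec})--(\ref{sol:threed}) at the limit point $\mathbb{X}_1^*$, which you make explicit via the continuous mapping theorem. Your careful handling of the degenerate headings $\theta_i^* \in \{\frac{\pi}{2}, \frac{3\pi}{2}\}$, where the $\arctan$ map loses continuity, is a detail the paper leaves implicit and is dispatched by the same probability-zero argument used in Theorem \ref{th:one}.
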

We now need to show that the estimates of $\{x_{oi}, y_{oi}\}$ also goes to zero in the limit. From (\ref{sol:foura})  the estimation error in $\{x_{oi}^*, y_{oi}^* \}$ can be characterized as:

\bea \la{sol:fiveb}
(\mathbb{X}_2^* - \mathbb{X}_2) =(\mathbb{A}_2^T \mathbb{A}_2)^{-1} \mathbb{A}_2^T  \underbrace{\begin{bmatrix}
     \Delta y_{1,2}     \\[0.2em]
       \vdots       \\[0.2em]
      \Delta y_{1,n}     \\[0.2em]
       \mathbb{X^*}_1 (3) -  \mathbb{\hat{X}}_1 (3) \\[0.2em]
     \end{bmatrix}}_{\Delta \mathbb{Y}_2}
\eea
where 
\bea 
\begin{split}
\Delta y_{1,j} = & \frac{\hat{s}_i^{*2}}{4} \left((t_{i12}^*-t_{i11}^*)^2-(t_{ij2}^*-t_{ij1}^*)^2 \right)-\\
&\frac{\hat{s}_i^{*2}}{4} \left((t_{i12}-t_{i11})^2-(t_{ij2}-t_{ij1})^2 \right)
\end{split}
\eea
Suppose the followed ordered statistics hold:
\bea \nonumber
\begin{split}
 &t_{i22}-t_{i21} \leq \cdots \leq t_{i,\lfloor{\frac{n}{2}}\rfloor-1,2}^*-t_{i\lfloor{\frac{n}{2}}\rfloor-1,1}  \leq t_{i12}^*-t_{i11} \\
  & \leq t_{i,\lfloor{\frac{n}{2}}\rfloor+1,2}^*-t_{i,\lfloor{\frac{n}{2}}\rfloor+1,1} \leq \cdots \leq t_{in2}-t_{in1}
\end{split}
\eea
From (\ref{sol:fiveb}), we have the following lemma:

\begin{lemma} \label{lem:fiveb}
Under Assumption \ref{noise1} and (\ref{sol:fiveb})
$$\lim_{k\to\infty} (\mathbb{X}_2^* - \mathbb{X}_2)  = \rightarrow [0 ~  0]^T.$$
\end{lemma}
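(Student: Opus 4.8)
The plan is to follow the template of the proof of Lemma~\ref{lem:fivea}, replacing the fixed regressor matrix $\mathbb{A}_1$ by the geometry matrix $\mathbb{A}_2$ of (\ref{sol:foura}). First I would normalise the least-squares operator in (\ref{sol:fiveb}) by the sensor count $k$, writing $(\mathbb{A}_2^T\mathbb{A}_2)^{-1}\mathbb{A}_2^T = (\tfrac{1}{k}\mathbb{A}_2^T\mathbb{A}_2)^{-1}\tfrac{1}{k}\mathbb{A}_2^T$, and invoke Assumption~\ref{ass:two} together with the strong law of large numbers to show that $\tfrac{1}{k}\mathbb{A}_2^T\mathbb{A}_2$ converges to a fixed matrix $M$ whose entries are second moments of the coordinate differences $x_1-x_j,\ y_1-y_j$ weighted by the trigonometric factors in $\alpha_{2(1,j)},\beta_{2(1,j)}$. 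Genericity of the placement makes $M$ invertible with probability one, so the problem reduces to proving $\tfrac{1}{k}\mathbb{A}_2^T\Delta\mathbb{Y}_2\to 0$ and then closing with the same continuous-mapping argument used in Lemma~\ref{lem:fivea}.

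Next I would dispose of the coupling through the last row of (\ref{sol:foura}). That row contributes the single term $-\tfrac{\sin\hat\theta_i^*}{\hat s_i^*}\big(\mathbb{X}_1^*(3)-\hat{\mathbb{X}}_1(3)\big)$ to $\Delta\mathbb{Y}_2$; after the $\tfrac{1}{k}$ normalisation it is $\tfrac{1}{k}$ times a quantity that already tends to $0$ by Lemma~\ref{lem:fivea}, hence negligible. The trigonometric coefficients and $\hat s_i^*$ appearing throughout $\mathbb{A}_2$ and in the $\Delta y_{1,j}$ are random, but by Corollary~\ref{cor:fivea} they converge to the true $\{\theta_i^*,s_i^*\}$, so Slutsky's theorem lets me treat them as deterministic constants in the limit.

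The remaining, and main, work is to show that each component of $\tfrac{1}{k}\mathbb{A}_2^T\Delta\mathbb{Y}_2$ — a weighted average $\tfrac{1}{k}\sum_{j}\alpha_{2(1,j)}\,\Delta y_{1,j}$ (and the analogous $\beta$ sum) — tends to zero. Writing $\Delta y_{1,j}=A_1-B_j$ with common pivot $A_1=\tfrac{\hat s_i^{*2}}{4}\big((t_{i12}^*-t_{i11}^*)^2-(t_{i12}-t_{i11})^2\big)$ and $B_j=\tfrac{\hat s_i^{*2}}{4}\big((t_{ij2}^*-t_{ij1}^*)^2-(t_{ij2}-t_{ij1})^2\big)$, the pivot part factors as $A_1\cdot\tfrac{1}{k}\sum_j\alpha_{2(1,j)}$; since $\alpha_{2(1,j)}$ is built from the zero-mean coordinate differences of Assumption~\ref{ass:two}, its sample mean vanishes, and the ordered-statistics hypothesis stated just before the lemma keeps $A_1$ the median width, hence bounded in probability, so the product $A_1\cdot o(1)$ tends to $0$.

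The hard part will be the residual sum $\tfrac{1}{k}\sum_j\alpha_{2(1,j)}B_j$. The difficulty is that $B_j$ is a difference of \emph{squared} noisy time-widths, so under the Erlang model of Proposition~\ref{noise1} it is \emph{not} zero-mean — $\mathbb{E}[(t_{ij2}-t_{ij1})^2]$ carries the noise variance — and, worse, this bias is correlated with the sensor geometry because the true width $t_{ij2}^*-t_{ij1}^*$ is itself a function of $\{x_j,y_j\}$. Thus the clean independence-plus-zero-mean argument of Lemma~\ref{lem:fivea} does not transfer verbatim. My plan to close the gap is to condition on the sensor locations and argue that the geometry-weighted conditional bias $\tfrac{1}{k}\sum_j\alpha_{2(1,j)}\,\mathbb{E}[B_j\mid x_j,y_j]$ is annihilated by the zero-mean differencing structure of $\alpha_{2(1,j)}$ (equivalently, that the surviving term scales as $O(1/\lambda_T)$ and vanishes as the threshold count grows), while the zero-mean fluctuation $B_j-\mathbb{E}[B_j\mid x_j,y_j]$ is handled by a Kolmogorov law of large numbers for independent, non-identically distributed summands with bounded variances. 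Showing that this bias truly cancels — rather than merely being small — is the crux of the argument, and is the step most reliant on the genericity and zero-mean placement assumptions.
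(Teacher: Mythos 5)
Your proposal follows the same skeleton as the paper's own proof of Lemma~\ref{lem:fiveb}: normalize the least-squares operator, argue $\tfrac{1}{k}\mathbb{A}_2^T\Delta\mathbb{Y}_2\to 0$, and close via invertibility of the limiting Gram matrix (your handling of the last row, as $\tfrac{1}{k}$ times a quantity that vanishes by Lemma~\ref{lem:fivea}, is cleaner than the paper's). Where you differ is in honesty about the key step, and there your objection is correct: the paper's entire proof is the assertion that the entries of $\Delta\mathbb{Y}_2$ (all but the last) are ``zero mean Laplacian random variables,'' whereas under Proposition~\ref{noise1} the recorded times are conditionally unbiased but the entries $\Delta y_{1,j}$ are differences of \emph{squared} widths, so $\mathbb{E}\bigl[(t_{ij2}-t_{ij1})^2\bigr]=(t_{ij2}^*-t_{ij1}^*)^2+\mathrm{Var}(t_{ij2}-t_{ij1})$ carries an $O(1/\lambda_T)$ bias, and that bias depends on $(x_j,y_j)$ through the true transition times, hence correlates with the weights $\alpha_{2(1,j)},\beta_{2(1,j)}$. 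The paper gives no argument for why this bias or this correlation disappears; it simply asserts zero mean. So the ``crux'' you leave open is not a step you failed to recover from the paper --- the paper does not contain it. At fixed $\lambda_T$ the limit can only be claimed to be zero modulo $O(1/\lambda_T)$ terms, or in a joint limit $k,\lambda_T\to\infty$.

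However, your own argument also breaks at the pivot term, and this failure is fatal to the plan as written. You claim $\tfrac{1}{k}\sum_j\alpha_{2(1,j)}\to 0$ because the $\alpha_{2(1,j)}$ are built from zero-mean coordinate differences. They are not zero mean in the relevant conditional sense: every term shares the same pivot coordinates, $\alpha_{2(1,j)}=2(x_1-x_j)\cos^2\hat\theta_i^*-2(y_1-y_j)\sin\hat\theta_i^*\cos\hat\theta_i^*$, and sensor $1$ stays fixed as $k$ grows, so by the strong law $\tfrac{1}{k}\sum_j\alpha_{2(1,j)}\to 2x_1\cos^2\theta_i^*-2y_1\sin\theta_i^*\cos\theta_i^*$, which is nonzero with probability one; only the unconditional mean (averaging over sensor $1$ as well) vanishes. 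Meanwhile your pivot quantity $A_1$ (sensor $1$'s squared-width error) is a single $O_p(1/\sqrt{\lambda_T})$ random variable that does not shrink no matter how many sensors are added. Hence $A_1\cdot\tfrac{1}{k}\sum_j\alpha_{2(1,j)}$ converges to a nonzero random limit: the pivot sensor's noise sits in every row of $\Delta\mathbb{Y}_2$ and cannot be averaged out over $j$. This structural defect afflicts the paper's proof equally, since the paper ignores the pivoting entirely. A correct treatment must symmetrize over all pairs $(k,j)$ in (\ref{sol:foura}), re-randomize the pivot, or weaken the conclusion to $\mathbb{X}_2^*-\mathbb{X}_2=O_p(1/\sqrt{\lambda_T})$ as $k\to\infty$; as stated, neither your argument nor the paper's establishes the lemma.
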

\begin{proof}
Observe that the entries of $\Delta \mathbb{Y}_2$, with the exception of the last row, are zero mean Laplacian random variables. The last entry is also zero mean distributed because it is an algebraic sum of $x_j$ and $y_j$ components. Thus: $$\lim_{k\to\infty}  \mathbb{A}_2^T  \Delta \mathbb{Y}_2 \rightarrow 0.$$ The result therefore follows from this observation.
\end{proof}
\begin{corollary} \label{cor:fiveb}
Under Assumption \ref{noise1} and (\ref{sol:fiveb})
$$\lim_{k\to\infty} [x_{oi}^* -\hat{x}_{oi}, y_{oi}^*-\hat{y}_{oi} ]^T \rightarrow [0 ~  0 ]^T.$$
\end{corollary}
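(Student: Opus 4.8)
The plan is to read the Corollary off Lemma~\ref{lem:fiveb} componentwise, exploiting the fact that the recovery map for the offset parameters is the identity rather than a nonlinear transformation. First I would note that, by the construction of the second least-squares problem (\ref{sol:foura}), the unknown vector is $\mathbb{X}_2 = [\,\hat{x}_{oi}, \hat{y}_{oi}\,]^T$, so that its two entries are precisely the reported estimates $\hat{x}_{oi} = \mathbb{X}_2(1)$ and $\hat{y}_{oi} = \mathbb{X}_2(2)$, while the noise-free counterpart is $\mathbb{X}_2^* = [\,x_{oi}^*, y_{oi}^*\,]^T$. Consequently the difference vector whose limit is already controlled by Lemma~\ref{lem:fiveb},
$$
\mathbb{X}_2^* - \mathbb{X}_2 = \begin{bmatrix} x_{oi}^* - \hat{x}_{oi} \\ y_{oi}^* - \hat{y}_{oi} \end{bmatrix},
$$
is exactly the vector of offset errors appearing in the statement of the Corollary.

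The second step is purely topological: in the finite-dimensional space $\mathbb{R}^2$, convergence of a vector to the zero vector is equivalent to convergence of each of its coordinates to zero. Hence Lemma~\ref{lem:fiveb}, which gives $\lim_{k\to\infty}(\mathbb{X}_2^* - \mathbb{X}_2) \rightarrow [0~0]^T$, immediately yields $\lim_{k\to\infty}(x_{oi}^* - \hat{x}_{oi}) \rightarrow 0$ and $\lim_{k\to\infty}(y_{oi}^* - \hat{y}_{oi}) \rightarrow 0$, which is the claim. I would underline the contrast with Corollary~\ref{cor:fivea}: there the estimates $\hat{s}_i$ and $\hat{\theta}_i$ are extracted from $\mathbb{X}_1$ through the nonlinear maps (\ref{sol:threec})--(\ref{sol:threed}), so transferring the limit demands continuity of the square-root and arctangent at the true point; here no such delta-method argument is needed, because the map from $\mathbb{X}_2$ to $(\hat{x}_{oi}, \hat{y}_{oi})$ is simply the identity on coordinates.

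Since the analytic content of the statement lives entirely inside Lemma~\ref{lem:fiveb}, there is no genuine obstacle at the level of the Corollary itself. The only point worth verifying is a well-posedness condition inherited from the lemma, namely that $(\mathbb{A}_2^T \mathbb{A}_2)^{-1}$ stays bounded as $k\to\infty$ so that the estimate $\mathbb{X}_2$ remains well defined along the sequence. This is guaranteed by the genericity and non-coincidence of the i.i.d. sensor placements established in Theorem~\ref{th:one}, which keep $\mathbb{A}_2$ of full column rank almost surely even in the limit. With that caveat in place, the Corollary follows directly from Lemma~\ref{lem:fiveb}.
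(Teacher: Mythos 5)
Your proposal is correct and takes essentially the same route as the paper: the paper states Corollary~\ref{cor:fiveb} with no separate proof, precisely because, as you observe, the unknown vector $\mathbb{X}_2$ in (\ref{sol:foura}) has as its two entries exactly the estimates $\hat{x}_{oi}=\mathbb{X}_2(1)$ and $\hat{y}_{oi}=\mathbb{X}_2(2)$, so Lemma~\ref{lem:fiveb} is already the claim up to renaming coordinates. Your additional remarks on componentwise convergence in $\mathbb{R}^2$, the contrast with the nonlinear extraction maps behind Corollary~\ref{cor:fivea}, and the full-column-rank condition on $\mathbb{A}_2$ are all sound elaborations of what the paper leaves implicit.
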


\subsection{Scalable tracking of radiation source on piece-wise parabolic trajectories} \label{subsec:num3}

From Theorem \ref{th:onea}, we have shown that given six generically placed radiation sensors such that they are non-collinear, we can track a parabolic trajectory precisely with probability being 1, if we are given the exact time of shortest distance of approach. In this section, we will present a robust and scalable algorithm  in terms of noise and number of sensors being used. Recall from (\ref{sol:onea}):

\scriptsize
\bea \nonumber
\underbrace{\begin{bmatrix}
       1 & (t_{i1}^*-t_{oi})^{3}& (t_{i1}^*-t_{oi})^{2} & t_{i1}^{*}-t_{oi}   & x_1 & y_1       \\[0.3em]
             \vdots & \vdots & \vdots& \vdots  & \vdots & \vdots      \\[0.3em]
       1 & (t_{in}^*\vdots)^{3}& (t_{in}^*\vdots)^{2} & t_{in}^{*}\vdots   & x_n & y_n      \\[0.3em]
     \end{bmatrix}}_{\mathbb{A}_3} 
\mathbb{X}_3 
=\mathbb{Y}_3
\eea
\normalsize

The least squares solution is:
\bea \la{sol:sixb}
\mathbb{X}_3 =(\mathbb{A}_3^T \mathbb{A}_3)^{-1} \mathbb{A}_3^T \mathbb{Y}_3
\eea
and note that:

{\scriptsize
$$\hat{\gamma}_i^* =2 \mathbb{X}_3 (2)$$ 
$$\hat{\alpha}_i^* = - 2 \mathbb{X}_3 (2)\mathbb{X}_3 (5) $$
$$\hat{\beta}_i^* = - 2 \mathbb{X}_3 (2)\mathbb{X}_3 (6) $$
$$\hat{y}_{oi}^* = \mathbb{X}_3 (4) - 2 \mathbb{X}_3 (2)\left( \mathbb{X}_3^2 (5) + \mathbb{X}_3^2 (6)\right)$$
$$\hat{x}_{oi}^* = -\frac{\mathbb{X}_3(1)}{ \mathbb{X}_3 (5)} -\left( \mathbb{X}_3 (4) - 2 \mathbb{X}_3 (2)\left( \mathbb{X}_3^2 (5) + \mathbb{X}_3^2 (6)\right)\right)\frac{\mathbb{X}_3 (6)}{\mathbb{X}_3 (5)}.$$
}

The least squares solution easily scales with number of measurement sensors.
\subsection{Noise in recorded times for radiation source on piece-wise parabolic trajectories}\la{subsec:num4}

We now consider the presence of noise in observed time stamps. The following proposition follows from proposition \ref{noise1}:
\begin{proposition} \label{noise2}
The recorded time $t_{ij}$ corresponding to time of shortest approach to the the $j^{th}$ sensor and the $i^{th}$ piece of the parabolic trajectory is unique and obeys:
$t_{ij}-t_{oi} \sim {Erlang}\left(2 \lambda_T, \frac{2 \lambda_T}{t_{ij}^*-t_{oi}}\right)$.
\end{proposition}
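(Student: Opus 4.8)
The plan is to obtain this proposition as the direct parabolic analogue of the corollary to Proposition \ref{noise1}, exploiting the fact that the statistical argument behind Proposition \ref{noise1} concerns only the exponential inter-arrival statistics of the detected photons and is therefore indifferent to whether the source travels on a line or on a parabola. First I would settle the uniqueness claim, and then I would recover the distributional claim by the same convolution-and-scaling argument used in the linear case.

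For uniqueness, I would invoke Assumption \ref{ass:four}. Along a parabolic piece, $d_{ij}^2(t)$ is a quartic in $t-t_{oi}$, so the stationarity condition $\dot d_{ij}(t)=0$, equivalently (\ref{tstar1}), is a cubic and may in principle admit as many as three real roots. Assumption \ref{ass:four} forces the received photon count (\ref{nf}) to be unimodal, so exactly one of these roots corresponds to the single peak of the count; that root is the time of shortest distance of approach. This shows $t_{ij}$ is well defined and unique, which is the first half of the claim.

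For the distribution, I would start from the definition (\ref{tstar}), $t_{ij}=(t_{ij1}+t_{ij2})/2$, so that $t_{ij}-t_{oi}=\tfrac12\big[(t_{ij1}-t_{oi})+(t_{ij2}-t_{oi})\big]$. By Proposition \ref{noise1}, each transition time obeys $t_{ijk}-t_{oi}\sim Erlang(\lambda_T,\lambda_T/(t_{ijk}^*-t_{oi}))$, that is, each is a sum of $\lambda_T$ independent exponential inter-arrival times. Concatenating the two transitions yields a sum of $2\lambda_T$ such exponentials; adopting the common rate $\lambda_T/(t_{ij}^*-t_{oi})$ collapses this sum to $Erlang(2\lambda_T,\lambda_T/(t_{ij}^*-t_{oi}))$. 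Dividing by $2$ and applying the Gamma scaling rule (multiplying a Gamma variate by $c$ preserves the shape and divides the rate by $c$, here $c=\tfrac12$) multiplies the rate by $2$, giving $t_{ij}-t_{oi}\sim Erlang(2\lambda_T,\,2\lambda_T/(t_{ij}^*-t_{oi}))$, exactly as stated.

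The hard part will be justifying the collapse of the two transitions into a single Erlang law. Strictly, $t_{ij1}-t_{oi}$ and $t_{ij2}-t_{oi}$ carry distinct rate parameters $\lambda_T/(t_{ij1}^*-t_{oi})$ and $\lambda_T/(t_{ij2}^*-t_{oi})$, so their sum is a hypoexponential (generalized Erlang) rather than a true Erlang. The reduction to a single rate rests on the same approximation already used in Proposition \ref{noise1}, namely replacing the empirical mean of the $\lambda_T$ arrival times by its expectation, together with the near-symmetry of the two transitions about the peak, which permits taking the midpoint value $t_{ij}^*-t_{oi}$ as the common mean. Once this approximation is granted, the remaining steps are the routine convolution and scaling bookkeeping for Gamma random variables, and the proposition follows verbatim from the linear-case corollary.
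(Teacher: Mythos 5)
Your proposal is correct and follows essentially the same route as the paper: the paper offers no written proof of Proposition \ref{noise2}, merely asserting that it follows from Proposition \ref{noise1} (via the corollary that combines the two transition times into an Erlang of shape $2\lambda_T$), and your convolution-and-scaling chain through (\ref{tstar}) is precisely that argument made explicit. Your treatment is in fact more complete than the paper's, since you additionally justify the uniqueness claim via Assumption \ref{ass:four} applied to the cubic (\ref{tstar1}), and you honestly flag the hypoexponential-to-Erlang approximation (distinct rates at the two transitions) that the paper leaves implicit.
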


From (\ref{sol:sixb}) the estimation error in $\{s_i^*, \theta_i^* \}$ can be characterized as:

\bea \la{sol:sevena}
(\mathbb{X}_3^* - \mathbb{X}_3) =(\mathbb{A}_3^T \mathbb{A}_3)^{-1} \mathbb{A}_3^T  \underbrace{\begin{bmatrix}
      y_1(t_{i1}^*-t_{i1} )     \\[0.2em]
      y_2 (t_{i2}^*-t_{i2} )      \\[0.2em]
       \vdots       \\[0.2em]
      y_n( t_{in}^*-t_{ik} )     \\[0.2em]
     \end{bmatrix}}_{\Delta \mathbb{Y}_3}
\eea

From (\ref{sol:sevena}), the following lemma follows:

\begin{lemma} \label{lem:fivea}
Under Assumption \ref{noise2} and (\ref{sol:sevena})
$$\lim_{k\to\infty} (\mathbb{X}_3^* - \mathbb{X}_3)  = \Delta \mathbb{X}_3 \rightarrow [0 ~  0 ~  0 ~0 ~ 0 ~0]^T.$$
\end{lemma}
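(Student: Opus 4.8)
The plan is to mirror the argument used for the linear trajectory in Section \ref{subsec:num2} (cf.\ equation (\ref{sol:fivea}) and Lemma \ref{lem:fiveb}), exploiting that the $\{x_j,y_j\}$ are i.i.d.\ and that the timing noise is conditionally unbiased. First I would rewrite the right-hand side of (\ref{sol:sevena}) by inserting a normalizing factor $1/k$, where $k=n$ is the number of triggered sensors, writing
\[
\mathbb{X}_3^* - \mathbb{X}_3 = \left(\tfrac{1}{k}\mathbb{A}_3^T \mathbb{A}_3\right)^{-1}\left(\tfrac{1}{k}\mathbb{A}_3^T \Delta \mathbb{Y}_3\right),
\]
so that the problem decouples into showing (i) the normalized Gram matrix $\tfrac{1}{k}\mathbb{A}_3^T\mathbb{A}_3$ converges to a fixed nonsingular matrix $M$, and (ii) the normalized cross term $\tfrac{1}{k}\mathbb{A}_3^T\Delta\mathbb{Y}_3$ converges to the zero vector. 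Since the two $k$ factors cancel, the product then tends to $M^{-1}\cdot 0 = 0$, which is exactly the claim $\Delta\mathbb{X}_3\to[0~0~0~0~0~0]^T$.

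The crux of step (ii) is the observation that the timing noise is zero mean once we condition on the sensor location. By Proposition \ref{noise2}, $t_{ij}-t_{oi}\sim\mathrm{Erlang}\!\left(2\lambda_T,\frac{2\lambda_T}{t_{ij}^*-t_{oi}}\right)$, whose mean equals $t_{ij}^*-t_{oi}$; hence $E[\,t_{ij}^*-t_{ij}\mid x_j,y_j\,]=0$. Each entry of $\mathbb{A}_3^T\Delta\mathbb{Y}_3$ is a sum over $j$ of the form $\sum_j c_j(x_j,y_j)\,y_j\,(t_{ij}^*-t_{ij})$, where the coefficient $c_j$ (a power of $t_{ij}^*-t_{oi}$, the constant $1$, or a coordinate) is a deterministic function of the sensor location through (\ref{tstar1}). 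Because the sensor positions are i.i.d., these summands are i.i.d., and conditioning on $(x_j,y_j)$ renders each summand mean zero. The strong law of large numbers then forces $\tfrac{1}{k}\mathbb{A}_3^T\Delta\mathbb{Y}_3\to 0$.

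For step (i), I would again invoke the law of large numbers on the i.i.d.\ sensor locations: each entry of $\tfrac{1}{k}\mathbb{A}_3^T\mathbb{A}_3$ is an empirical average of products of the columns of $\mathbb{A}_3$, and since $t_{ij}^*-t_{oi}$ is a fixed function of $(x_j,y_j)$ via (\ref{tstar1}), these averages converge to the corresponding population moments, assembling into a fixed matrix $M$. Combining this with step (ii) completes the proof.

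The hard part will be step (i), namely establishing that $M$ is well defined and invertible. Unlike the linear case, where the columns were simply $x_j$, $y_j$ and $1$ with the transparent diagonal limit $\mathrm{diag}\{\sigma_x^2,\sigma_y^2,1\}$, here three of the columns are the cubic, quadratic and linear powers of $t_{ij}^*-t_{oi}$, which is itself a nonlinear, implicitly defined function of the sensor coordinates. One must verify that these powers possess finite second moments under the sensor-location distribution and that the six columns do not become asymptotically linearly dependent, so that $M$ is nonsingular and $M^{-1}$ exists; by contrast, the conditional-mean-zero property driving step (ii) is comparatively routine.
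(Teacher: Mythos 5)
Your proposal follows essentially the same route as the paper's own proof: both express the error via the inverse Gram matrix acting on the cross term $\mathbb{A}_3^T\Delta\mathbb{Y}_3$, and both kill the cross term with a law-of-large-numbers argument resting on the i.i.d., zero-mean sensor locations and the unbiasedness of the Erlang timing noise. The differences are ones of rigor, and they all favor your version. First, the paper's parabolic proof drops the $1/k$ normalization (which it \emph{does} use in the linear analogue following (\ref{sol:fivea})) and writes the limit as $(\mathbb{A}_3^T\mathbb{A}_3)^{-1}[0~0~0~0~0~0]^T$, which is not a meaningful limiting statement since both factors vary with the number of sensors; your factorization into $\bigl(\tfrac{1}{k}\mathbb{A}_3^T\mathbb{A}_3\bigr)^{-1}$ and $\tfrac{1}{k}\mathbb{A}_3^T\Delta\mathbb{Y}_3$ is the correct way to pass to the limit (and you also disambiguate $k=n$, whereas the paper reuses $k$ as both summation index and limit variable). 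Second, the paper justifies the cross-term limit only by asserting that $x_k$, $y_k$ and $t_{ik}^*-t_{ik}$ are ``all zero mean random variables''; since $t_{ik}^*$ is a function of $(x_k,y_k)$ through (\ref{tstar1}), these factors are dependent, and zero-mean factors do not by themselves give zero-mean products. Your conditioning argument --- $E[\,t_{ij}^*-t_{ij}\mid x_j,y_j\,]=0$ from Proposition \ref{noise2}, then the tower property over i.i.d.\ summands --- is the repair that makes the claim true, \emph{provided} the time columns of $\mathbb{A}_3$ are the noise-free powers $(t_{ij}^*-t_{oi})^q$, as you assume and as (\ref{sol:onea}) defines them; note that the paper's own displayed sums instead use the noisy powers $(t_{ik}-t_{oi})^q$, under which the coefficient is correlated with the noise (e.g.\ $E[(t_{ik}-t_{oi})^3(t_{ik}^*-t_{ik})]<0$), the conditional-mean-zero property fails, and the argument as the paper states it does not go through. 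Third, the ``hard part'' you flag --- finiteness of the relevant moments and nonsingularity of the limiting Gram matrix $M$ --- is a genuine hole that the paper never addresses: it silently assumes $(\mathbb{A}_3^T\mathbb{A}_3)^{-1}$ exists and is well behaved in the limit. In short, your proposal is the paper's argument carried out correctly, with the one remaining obligation (invertibility of $M$) honestly identified rather than hidden.
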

\begin{proof}
\bea \nonumber
\begin{split}
&\Delta \mathbb{X}_3 = (\mathbb{A}_3^T \mathbb{A}_3)^{-1}   \begin{bmatrix}
      \sum\limits_{k=1}^{n} y_k(t_{ik}^*-t_{ik} )     \\[0.2em]
      \sum\limits_{k=1}^{n} y_k(t_{ik}^*-t_{ik} ) (t_{ik}-t_{oi} )^3    \\[0.2em]
      \sum\limits_{k=1}^{n} y_k(t_{ik}^*-t_{ik} ) (t_{ik}-t_{oi} )^2    \\[0.2em]
      \sum\limits_{k=1}^{n} y_k(t_{ik}^*-t_{ik} ) (t_{ik}-t_{oi} )    \\[0.2em]
      \sum\limits_{k=1}^{n} y_k x_k (t_{ik}^*-t_{ik} )    \\[0.2em]
      \sum\limits_{k=1}^{n} y_k^2 (t_{ik}^*-t_{ik} )    \\[0.2em]
     \end{bmatrix} \\
     &\lim_{k\to\infty}  \Delta \mathbb{X}_3  \rightarrow    (\mathbb{A}_3^T \mathbb{A}_3)^{-1}   \begin{bmatrix}
      0 & 0 & 0 & 0 & 0 & 0   \\[0.2em]     \end{bmatrix} ^T.
      \end{split}
\eea
The results stems from the fact that $\{x_k, y_k, t_{ik}^* -t_{ik} \}, ~k \in \{1, \cdots, n\}$ are all zero mean random variables.
\end{proof}
\begin{corollary} \label{cor:fivea}
Under Assumption \ref{noise2} and (\ref{sol:sevena})
$\lim_{k\to\infty} [\gamma_i^* -\gamma_i, s_i^* -\hat{s}_i, \theta_i^*-\hat{\theta}_i , x_{oi}^*-x_{oi}, y_{oi}^*-y_{oi}] $$\rightarrow [0 ~  0 ~0 ~0 ~0~0 ].$
\end{corollary}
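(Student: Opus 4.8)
The plan is to deduce the corollary from the preceding lemma by a continuous-mapping argument. Lemma \ref{lem:fivea} already establishes that, as the number of triggered sensors $k\to\infty$, the least-squares coefficient vector converges to its noise-free value, $\hat{\mathbb{X}}_3\to\mathbb{X}_3^*$. What remains is to transport this convergence from the \emph{coefficients} $\mathbb{X}_3(1),\dots,\mathbb{X}_3(6)$ to the \emph{physical} parabola parameters $\{\gamma_i^*,\alpha_i^*,\beta_i^*,x_{oi}^*,y_{oi}^*\}$, which are recovered through the explicit relations $\hat\gamma_i^*=2\mathbb{X}_3(2)$, $\hat\alpha_i^*=-2\mathbb{X}_3(2)\mathbb{X}_3(5)$, $\hat\beta_i^*=-2\mathbb{X}_3(2)\mathbb{X}_3(6)$, together with the displayed expressions for $\hat y_{oi}^*$ and $\hat x_{oi}^*$. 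Collecting these, I would write the recovery map as a single function $g:\mathbb{R}^6\to\mathbb{R}^5$ with $g(\mathbb{X}_3)=(\hat\gamma_i^*,\hat\alpha_i^*,\hat\beta_i^*,\hat y_{oi}^*,\hat x_{oi}^*)$, so that the claimed limit is exactly $g(\hat{\mathbb{X}}_3)-g(\mathbb{X}_3^*)\to 0$.

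First I would record that $g$ is built only from additions, multiplications, squares, and two divisions by $\mathbb{X}_3(5)$; hence it is continuous at every point where $\mathbb{X}_3(5)\neq0$. At the true value the recovery formulas give $\mathbb{X}_3^*(2)=\tfrac12\gamma_i^*$ and $\mathbb{X}_3^*(5)=-\alpha_i^*/\gamma_i^*$, so the only place continuity can fail is avoided precisely when $\gamma_i^*\neq0$ and $\alpha_i^*\neq0$. Both hold under the standing assumptions: a genuine parabolic piece has curvature $\gamma_i^*\neq0$ (otherwise (\ref{ptx}) collapses to the linear case already covered by Theorem \ref{th:one}), and a moving source has nonzero horizontal speed $\alpha_i^*\neq0$. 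Thus $\mathbb{X}_3^*$ lies in the open set on which $g$ is continuous.

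With continuity secured at $\mathbb{X}_3^*$, the corollary follows by the continuous mapping theorem applied to the convergence $\hat{\mathbb{X}}_3\to\mathbb{X}_3^*$ supplied by Lemma \ref{lem:fivea}: since each coordinate of $\hat{\mathbb{X}}_3$ converges to the corresponding coordinate of $\mathbb{X}_3^*$ and the limit point avoids the singular set $\{\mathbb{X}_3(5)=0\}$, every finite sum, product, and admissible quotient of the coordinates converges to its value at $\mathbb{X}_3^*$. Hence $g(\hat{\mathbb{X}}_3)\to g(\mathbb{X}_3^*)$ in the same (in-probability / almost-sure) sense as the lemma, which is the displayed statement.

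I expect the only genuine obstacle to be this continuity step, specifically guaranteeing that the denominator $\mathbb{X}_3(5)=-\alpha_i^*/\gamma_i^*$ stays bounded away from zero. Because $\hat{\mathbb{X}}_3$ is a random vector, strictly speaking I would want the event $\{\hat{\mathbb{X}}_3(5)\neq0\}$ to hold eventually with probability one; this is automatic from $\hat{\mathbb{X}}_3(5)\to\mathbb{X}_3^*(5)\neq0$, but it is exactly where the non-degeneracy conditions $\gamma_i^*\neq0$ and $\alpha_i^*\neq0$ are used, and it is worth stating them explicitly rather than leaving them implicit. Once this single denominator is controlled, the additive and multiplicative combinations are routine.
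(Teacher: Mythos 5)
Your proposal is correct and follows essentially the same route as the paper: the paper states this corollary without proof, treating it as immediate from the preceding lemma ($\Delta\mathbb{X}_3\to 0$) combined with the recovery formulas $\hat{\gamma}_i^*=2\mathbb{X}_3(2)$, $\hat{\alpha}_i^*=-2\mathbb{X}_3(2)\mathbb{X}_3(5)$, $\hat{\beta}_i^*=-2\mathbb{X}_3(2)\mathbb{X}_3(6)$, and the displayed expressions for $\hat{x}_{oi}^*,\hat{y}_{oi}^*$, which is precisely your continuous-mapping deduction. Your explicit flagging of the non-degeneracy conditions $\gamma_i^*\neq 0$ and $\alpha_i^*\neq 0$ (so that the denominator $\mathbb{X}_3(5)=-\alpha_i^*/\gamma_i^*$ stays bounded away from zero and the divisions in the recovery map are well defined) is a worthwhile detail that the paper leaves implicit.
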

\section{Upper bound on achievable error variance of estimated parameters} \la{sab}
We will now characterize the upper bound on the Cram\'{e}r-Rao lower bounds on two of the parameters of the linear trajectory. We will not tackle the bounds on the parabolic trajectories because it is intractable to characterize the time of shortest approach in terms of the parameters of the parabola only. Hence the exact analytical expressions for the entries of the Fisher information matrix is not available. 

For the linear trajectories, observe also, that the estimation of the parameters $\{x_{oi}, y_{oi}\}$ depend on the difference between the transition times $\{t_{ij1}-t_{oi}, t_{ij2}-t_{oi}\}$. Again, we do not have analytic expressions for these transition times in terms of only the parameters of the line. We can also show that using the time of shortest distance of approach only to estimate the $\{x_{oi}, y_{oi}\}$ parameters, the resulting Fisher information matrix is rank deficient, confirming that the estimates of $\{x_{oi}, y_{oi} \}$ is unbounded if only the $t_{ij}$ data is used. The estimates of $\{s_i^*, \theta_i^*\}$ is however different. We will use $t_{ij}-t_{oi}$ to upper bound the Cram\'{e}r-Rao lower bound. This is because we are using a derivative of the actual recorded data, the $\{t_{ij1}-t_{oi}, t_{ij2}-t_{oi}\}$ data. The joint probability density function of the received $t_{ij}-t_{oi}$ recordings, or the mean of the $\{t_{ij1}-t_{oi}, t_{ij2}-t_{oi}\}$ times becomes: 

\scriptsize
\bea
\prod\limits_{j=1}^n \frac{\left(\frac{2 \lambda_T}{t_{ij}^*-t_{oi}}\right)^{2 \lambda_T} (t_{ij}-t_{oi})^{2 \lambda_T -1} e^{-\frac{2 \lambda_T (t_{ij}-t_{oi})}{t_{ij}^*-t_{oi}}}}{2 \lambda_T -1}
\eea
\normalsize

Consequently, the log likelihood function, denoted $\mathbb{L}_{(s_i, \theta_i, x_{oi}, y_{oi})}$ becomes:
\bea \la{ml}
\begin{split}
&n\log{\frac{(2 \lambda_T)^{2 \lambda_T}}{2 \lambda_T -1}} +(2 \lambda_T -1)  \sum\limits_{j=1}^n \log{(t_{ij} -t_{oi})}\\
& -2 \lambda_T \sum\limits_{j=1}^n \log{\frac{1}{s_i^*}((x_j-x_{oi}^*)\sin{\theta_i^*}+(y_j-y_{oi}^*)}\\
& -2 \lambda_T \sum\limits_{j=1}^n \frac{s_i^* (t_{ij} -t_{oi})}{((x_j-x_{oi}^*)\sin{\theta_i^*}+(y_j-y_{oi}^*)\cos{\theta_i^*}) } 
\end{split}
\eea

From (\ref{ml}), the entries of the Fisher information matrix are found as:
\bea \la{t1}
\begin{split}
-\mathbb{E} &\left( \frac{\partial^2 \mathbb{L} _{(s_i, \theta_i, x_{oi}, y_{oi})}}{\partial s_i^2}\right)= \frac{2 n \lambda_T}{s_i^{*2}}
\end{split}
\eea
\bea \la{t2}
\begin{split}
-&\mathbb{E} \left( \frac{\partial^2 \mathbb{L} _{(s_i, \theta_i, x_{oi}, y_{oi})}}{\partial s_i \partial \theta_i}\right)=\\ &-2 \frac{\lambda_T}{s_i^{*}} \sum\limits_{j=1}^n \frac{(x_j-x_{oi}^*)\cos{\theta_i^*}-(y_j-y_{oi}^*)\sin{\theta_i^*}}{(x_j-x_{oi}^*)\sin{\theta_i^*}+(y_j-y_{oi}^*)\cos{\theta_i^*}}
\end{split}
\eea
and 
\bea \la{t3}
\begin{split}
-&\mathbb{E} \left( \frac{\partial^2 \mathbb{L} _{(s_i, \theta_i, x_{oi}, y_{oi})}}{\partial \theta_i^2}\right)=\\ &2\lambda_T \sum\limits_{j=1}^n \frac{((x_j-x_{oi}^*)\cos{\theta_i^*}-(y_j-y_{oi}^*)\sin{\theta_i^*})^2}{((x_j-x_{oi}^*)\sin{\theta_i^*}+(y_j-y_{oi}^*)\cos{\theta_i^*})^2}
\end{split}
\eea
From (\ref{t1}) -- (\ref{t3}), the upper bound on the estimation errors in $s_i^*$ and $\theta_i^*$ are as follows:

{\scriptsize
\bea \nonumber
\underset{\operatorname{argmin}} {\|s_i^* -\hat{s}_i \|^2}\le \frac{s_i^{*2}}{2 D \lambda_T} \sum\limits_{j=1}^n \left(\frac{(x_j-x_{oi}^*)\cos{\theta_i^*}-(y_j-y_{oi}^*)\sin{\theta_i^*}}{(x_j-x_{oi}^*)\sin{\theta_i^*}+(y_j-y_{oi}^*)\cos{\theta_i^*}}\right)^2
\eea
}
and 
{\scriptsize
\bea 
\underset{\operatorname{argmin}} {\|\theta_i^* -\hat{\theta}_i \|^2}\le \frac{1}{2 D \lambda_T}
\eea
}
where 
{\scriptsize 
\bea 
\begin{split}
D =\sum\limits_{j=1}^n \sum\limits_{k \ne j}^n \frac{(x_j-x_{oi}^*)\cos{\theta_i^*}-(y_j-y_{oi}^*)\sin{\theta_i^*}}{((x_j-x_{oi}^*)\sin{\theta_i^*}+(y_j-y_{oi}^*)\cos{\theta_i^*})^2} \times \\
\frac{(x_j-x_{oi})(y_k-y_{oi})-(y_j-y_{oi})(x_k-x_{oi})}{(x_k-x_{oi}^*)\sin{\theta_i^*}+(y_k-y_{oi}^*)\cos{\theta_i^*}} 
\end{split}
\eea
}
\begin{remark}
Observe that the error bounds on the estimates are inverse law proportional to $\lambda_T$. In fact, the {\it signal-to-noise-ratio,(SNR),} is easily characterized in terms of $\lambda_T$. The expected value of the transition times are $t_{ijk}^*-t_{oi}, k \in \{1,2\}$ whiles the variances are $\frac{(t_{ijk}^{*}-t_{oi})^2}{\lambda_T}$. Using the so called conventional definition of SNR as the inverse coefficient of variation:
$$SNR = \sqrt{\lambda_T}.$$ We will use this characterization of the SNR to illustrate the performance of our algorithm in the simulations section.
\end{remark}

\section{Simulations}  \la{ssim}
We consider a comprehensive set of simulations for both linear trajectories and parabolic trajectories. First we consider the noise free case, where the exact time stamps are available. Fig \ref{fig:fig1} shows four linear trajectories being tracked with zero noise in the recorded time stamps. On each line, only three sensors are used to estimate the linear joins. The estimated line is the black line within the large colored line boundaries. 
\begin{figure}[!htb]
\centering
\includegraphics[scale=0.5]{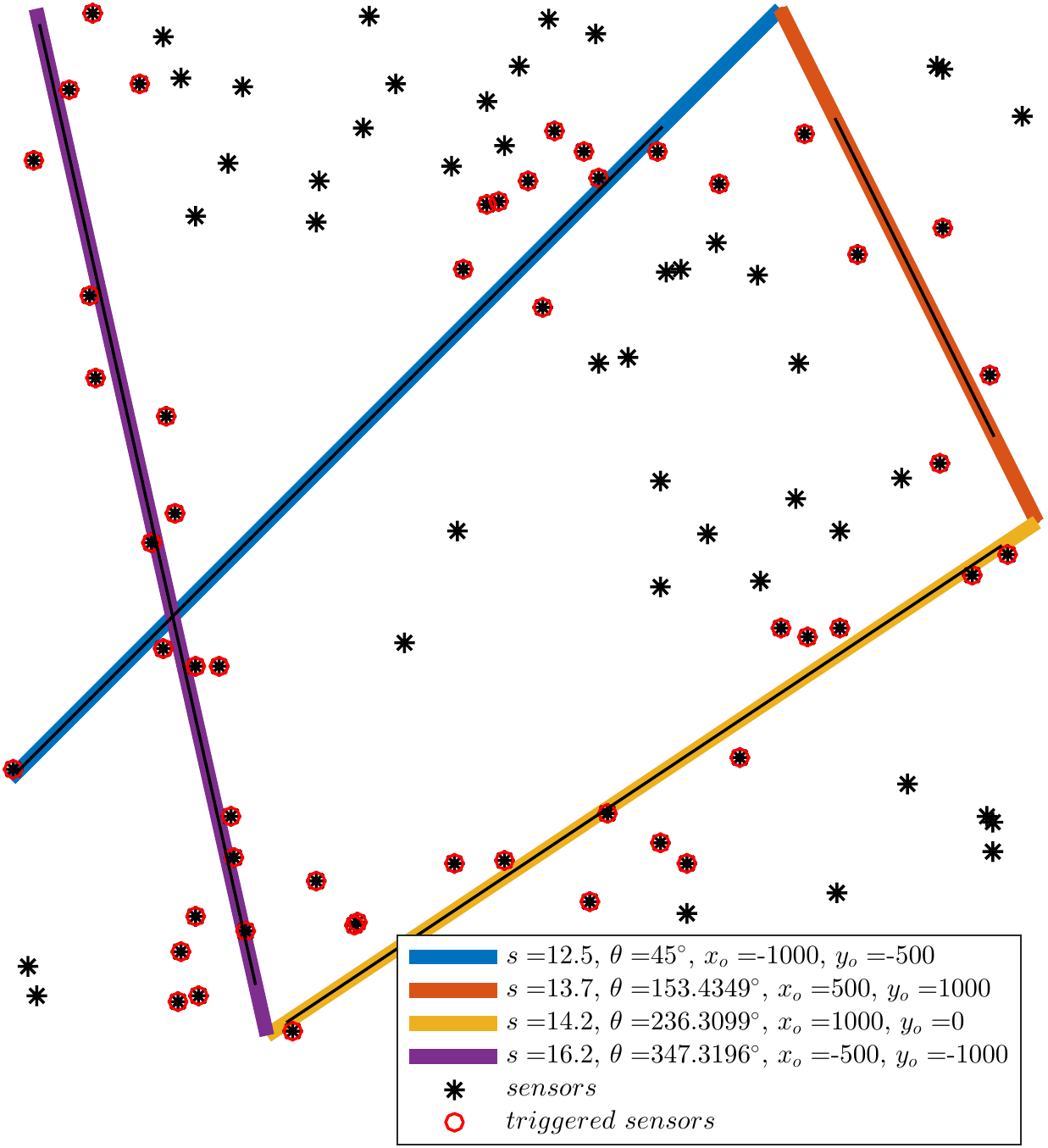}
\caption{ Piece-wise linear tracking with no noise in recorded time stamps}
  \label{fig:fig1}
\end{figure}
 Fig. \ref{fig:fig2} also shows the nonlinear counterpart of   Fig. \ref{fig:fig1}.  On each parabola, only six sensors are used to estimate the parabolic joins. The estimated parabola is the black curve within the boundaries of the large colored curves. 
\begin{figure}[!htb]
\centering
\includegraphics[scale=0.5]{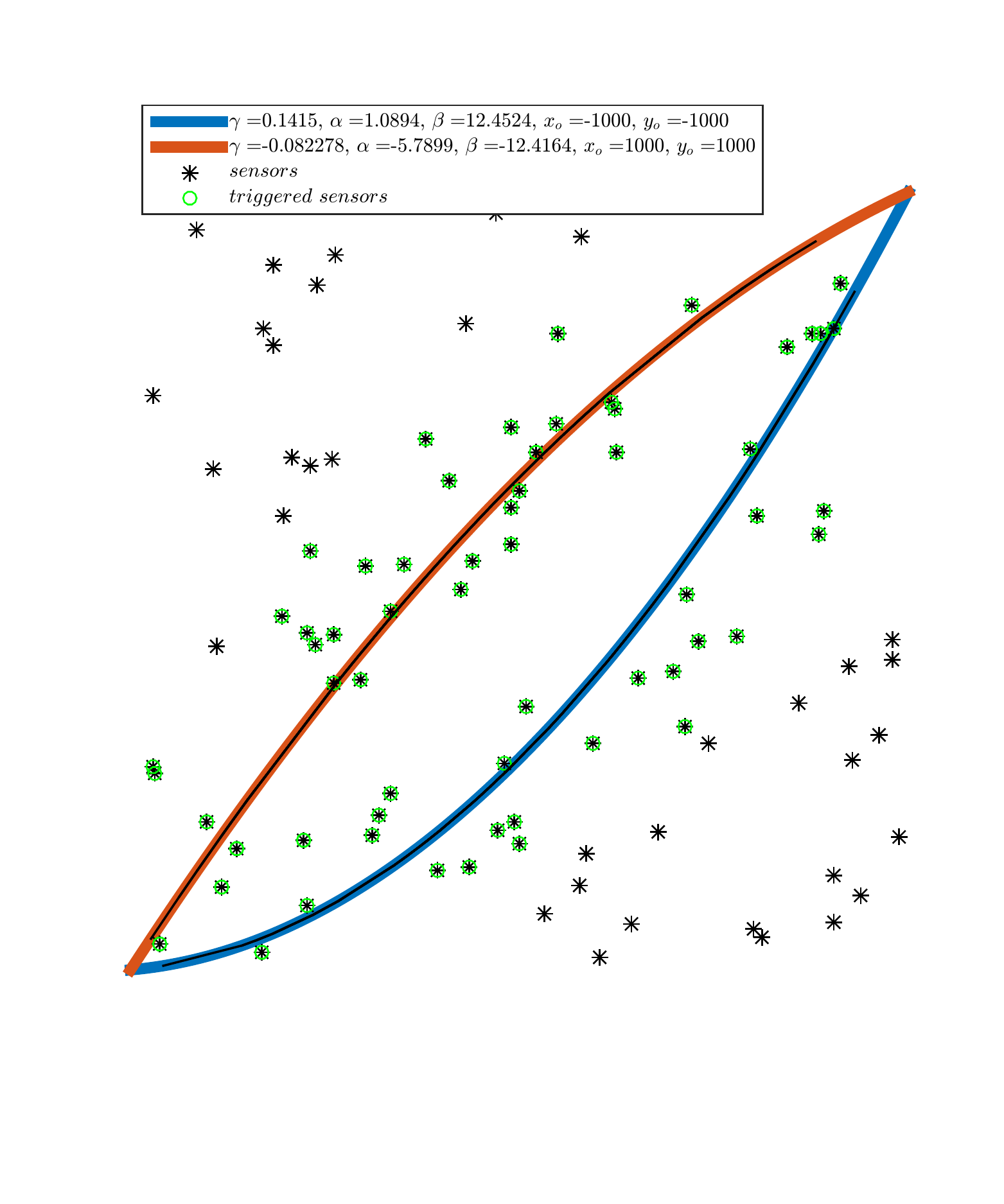}
\caption{ Piece-wise parabolic tracking with no noise in recorded time stamps}
  \label{fig:fig2}
\end{figure}
The two figures above illustrate that in the noise free case, our algorithm achieves perfect localization of the trajectories of the radiation sources. It is however clear that in practice, we would not have noise free time stamps. We turn our attention now to simulations with noise.

From  Fig. \ref{fig:fig3} --  Fig. \ref{fig:fig6}, we present results of $10^5$ Monte-Carlo simulations to illustrate the robustness of our algorithm to the presence of real noise. In all cases we present three different number of sensors on the same plot: $\{20,50,500\}$. In particular, we show the performance of our algorithm in the noisy case. The parameters $\{s, \theta\}$ are compared to our derived upper bound of the Cram\'{e}r-Rao lower bound. Here, all sensors are placed in a $2km$ by $2km$ square grid centered about zero. There are $\approx 1000$ sensors whose locations are uniformly distributed within the square grid. The true parameters being estimated are $\{s^*, \theta^*, x_o^*, y_o^*\} = \{30, 45^{\circ}, -1000,500\}$. In all cases, the minimum distance from which a sensor is triggered is 170m. The expected background noise is maintained at $\lambda_b =1$ photons per second. For all situations, $\alpha_s = 0.0068$. $\lambda_s$ is varied to keep the minimum distance for triggering of the sensors same for all SNR levels, which translates to the same expected transition time stamps for equitable comparison of performance. 
\begin{remark}
Notice that the SNR cannot go beyond 0 because the minimum allowable count to trigger the sensor is 1 photon. 
\end{remark}
In all simulations, it is seen that at an SNR of about 10dB, the performance for all the variables being estimated is good, with a percentage error well below or around 1\%. 
\begin{figure}[!htb]
\centering
\begin{subfigure}{.25\textwidth}
  \centering
\includegraphics[width=.9\linewidth]{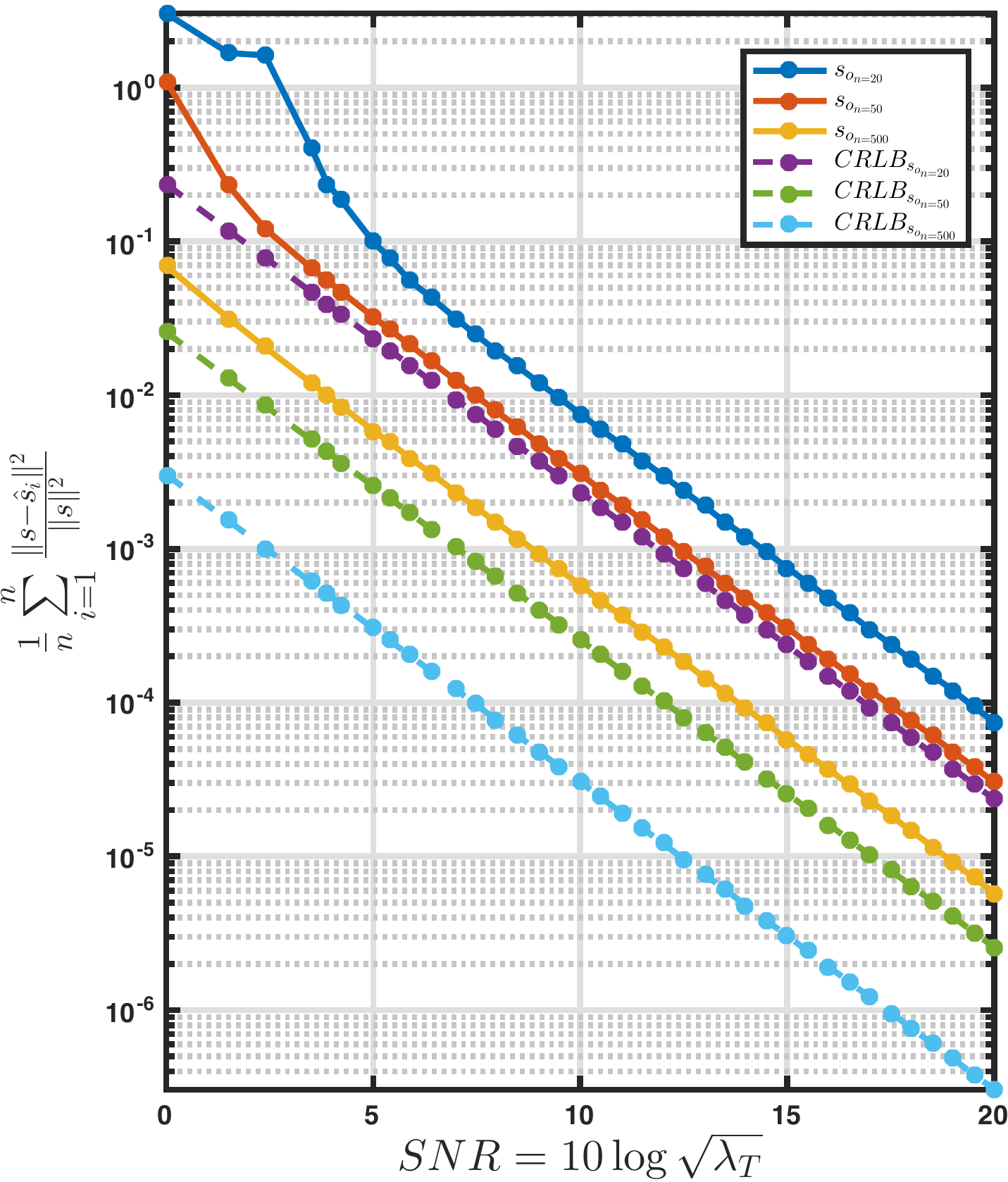}
\caption{ Estimation of $s$}
  \label{fig:fig3}
\end{subfigure}%
\begin{subfigure}{.25\textwidth}
  \centering
 \includegraphics[width=.9\linewidth]{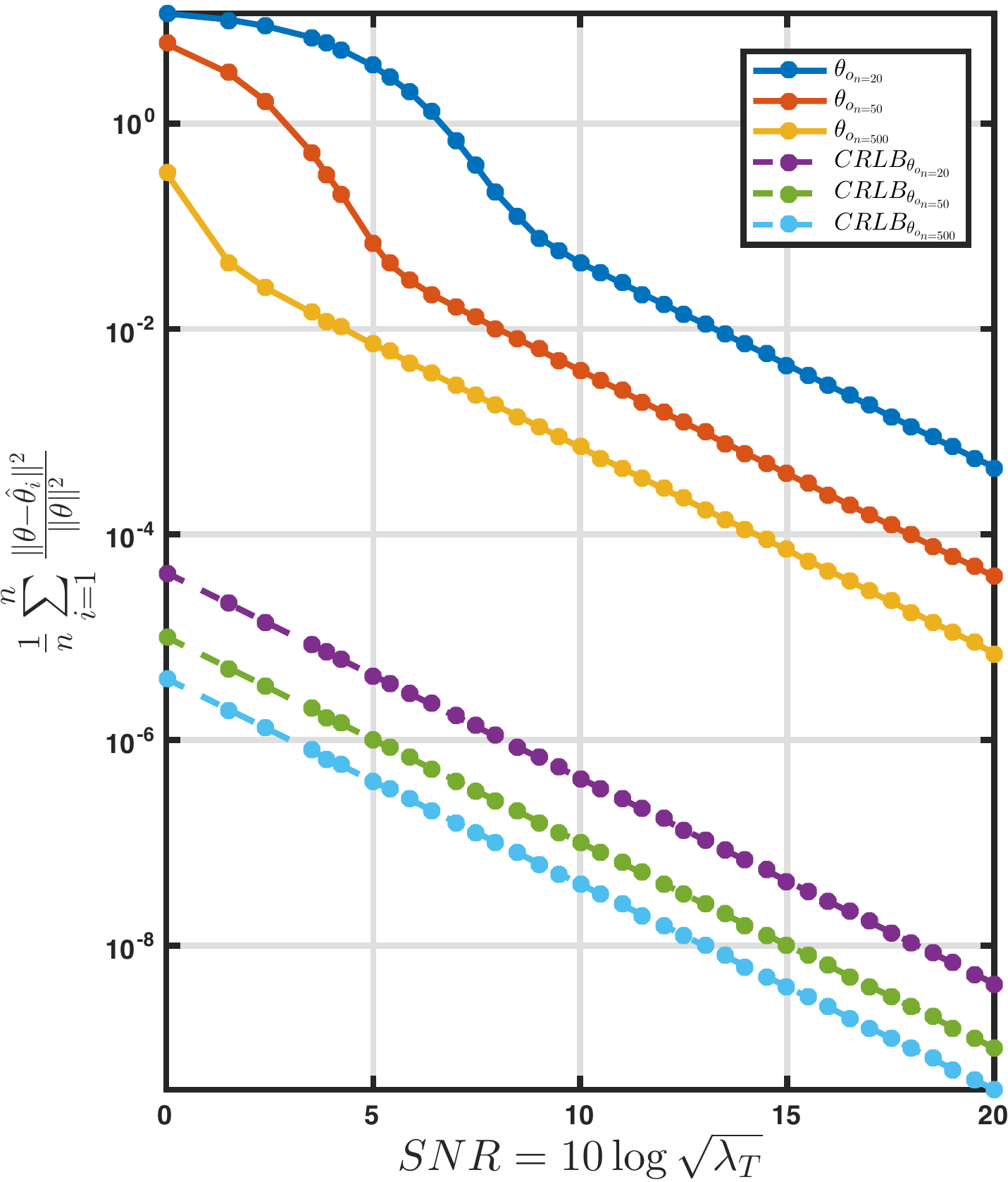}
\caption{ Estimation of $\theta$}
  \label{fig:fig4}
\end{subfigure}
\caption{Performance of $s$ and $\theta$ in the presence of noise for linear trajectories.}
\label{fig:test}
\end{figure}
\begin{figure}[!htb]
\centering
\begin{subfigure}{.25\textwidth}
  \centering
\includegraphics[width=.9\linewidth]{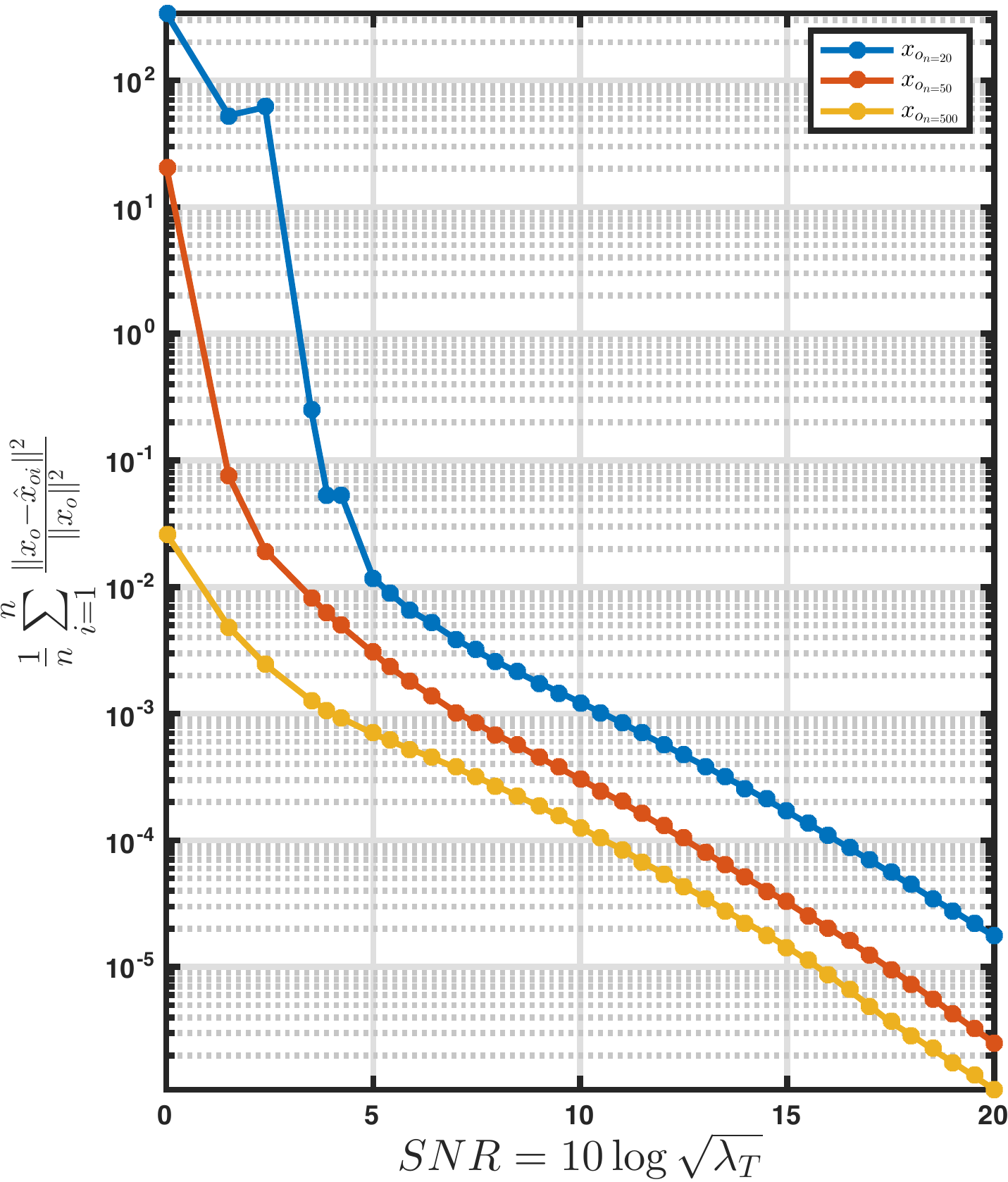}
\caption{ Estimation of $x_o$ }
  \label{fig:fig5}
  \end{subfigure}%
\begin{subfigure}{.25\textwidth}
  \centering
\includegraphics[width=.9\linewidth]{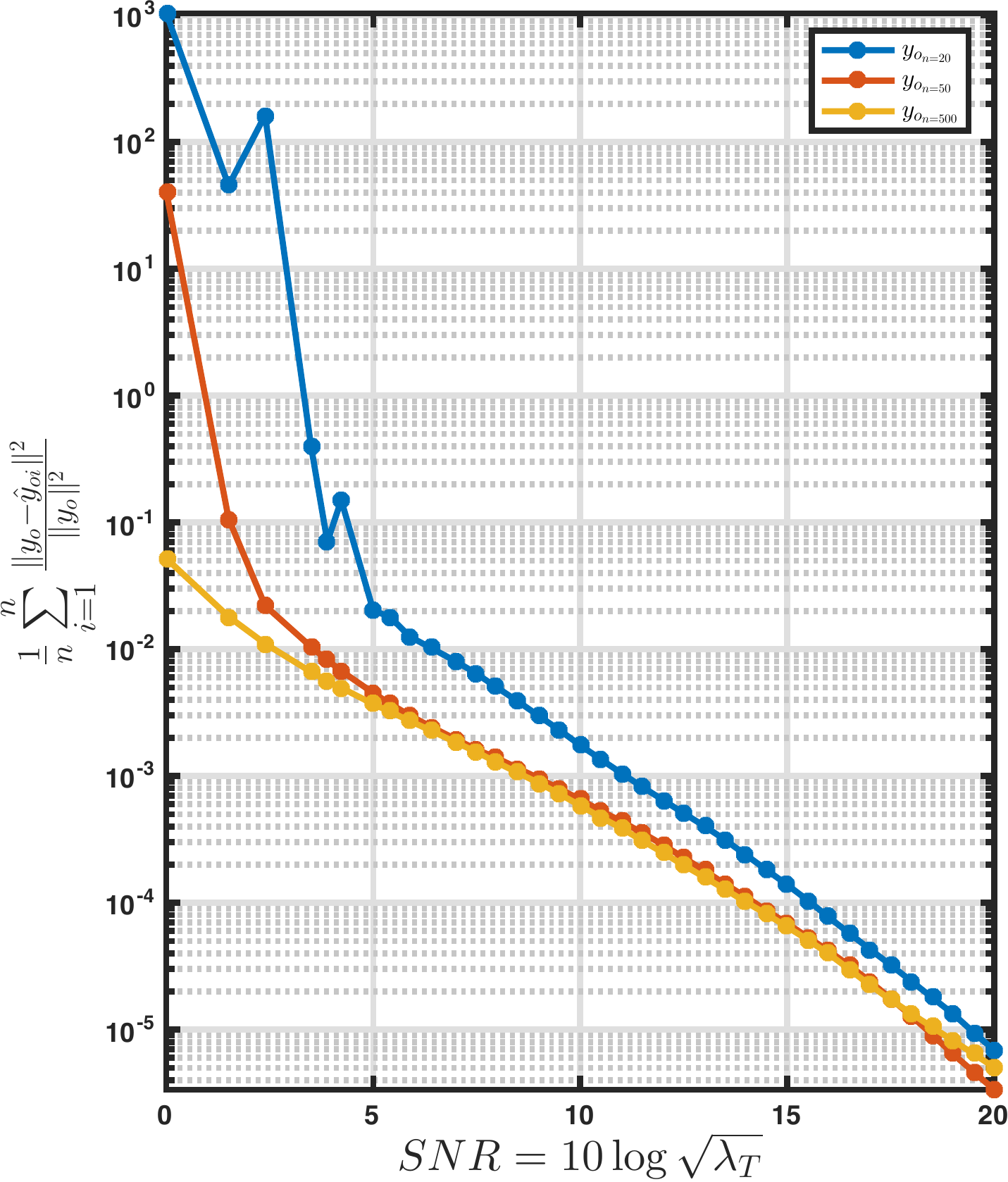}
\caption{ Estimation of $y_o$ }
  \label{fig:fig6}
  \end{subfigure}
\caption{Performance of $x_o$ and $y_o$ in the presence of noise for linear trajectories.}
\label{fig:test}
\end{figure}
From  Fig. \ref{fig:fig7} --  Fig. \ref{fig:fig11}, we present results of $10^5$ Monte-Carlo simulations on tracking of parabolic trajectories. Again,  we present three different number of sensors on the same plot: $\{20,50,200\}$. Again, all sensors are placed in a $2km$ by $2km$ square grid centered about zero. There are $\approx 500$ sensors whose locations are uniformly distributed within the square grid. The true parameters being estimated are $\{alpha^*, \beta^*, \gamma^*, x_o^*, y_o^*\} = \{29.89, 2.61, 0.82,  -1000,-1000\}$. In all cases, the minimum distance from which a sensor is triggered is 170 units. The expected background noise is maintained at $\lambda_b =1$ photons per second. For all situations, $\alpha_s = 0.0068$. $\lambda_s$ is varied to keep the minimum distance for triggering of the sensors same for all SNR levels, which translates to the same expected transition time stamps for equitable comparison of performance. 
In all simulations, it is seen that at an SNR of about 20dB, the performance for all the variables being estimated when using the number of sensors are $50$ or $200$ is good, with a percentage error well below or around 5--1\%. 
\begin{remark}
The simulation results for the parabolic trajectories gives an insight to realistic values of number of sensors or thresholds to use. We either end up using a very high number of sensors or a very high threshold. Note that very high thresholds mean all sensors will almost entirely be lined up along the trajectory of the source. This will make the observation matrix with the sensor locations column rank deficient and affect the inversion because of bad condition numbers. A high threshold on the other hand will mean weak signals will escape detection. A balance between number of sensors and threshold value will have to be made in practice.
\end{remark}
\begin{figure}[!htb]
\centering
\begin{subfigure}{.25\textwidth}
  \centering
\includegraphics[width=.9\linewidth]{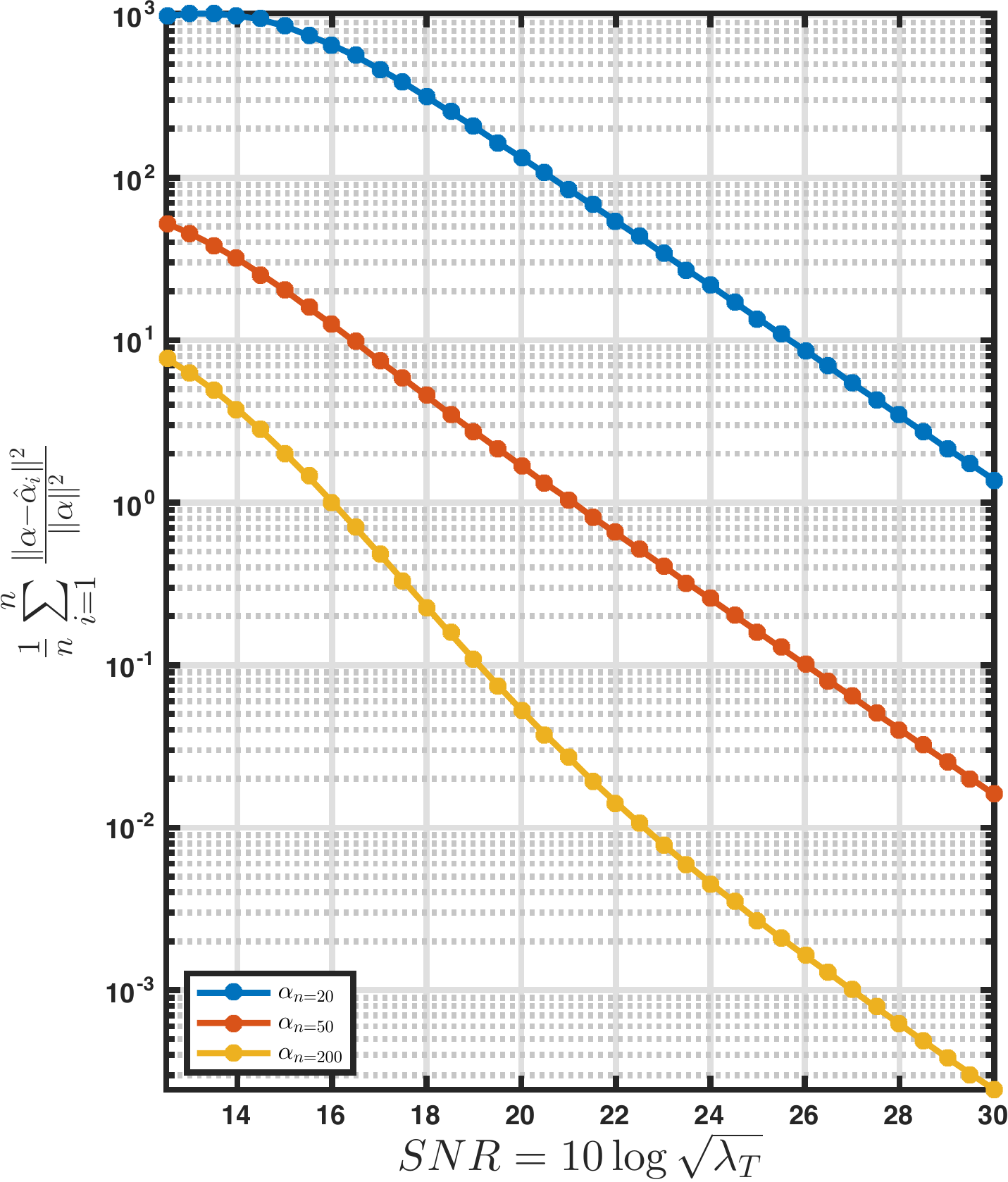}
\caption{ Estimation of $\alpha$ }
  \label{fig:fig7}
  \end{subfigure}%
\begin{subfigure}{.25\textwidth}
  \centering
\includegraphics[width=.9\linewidth]{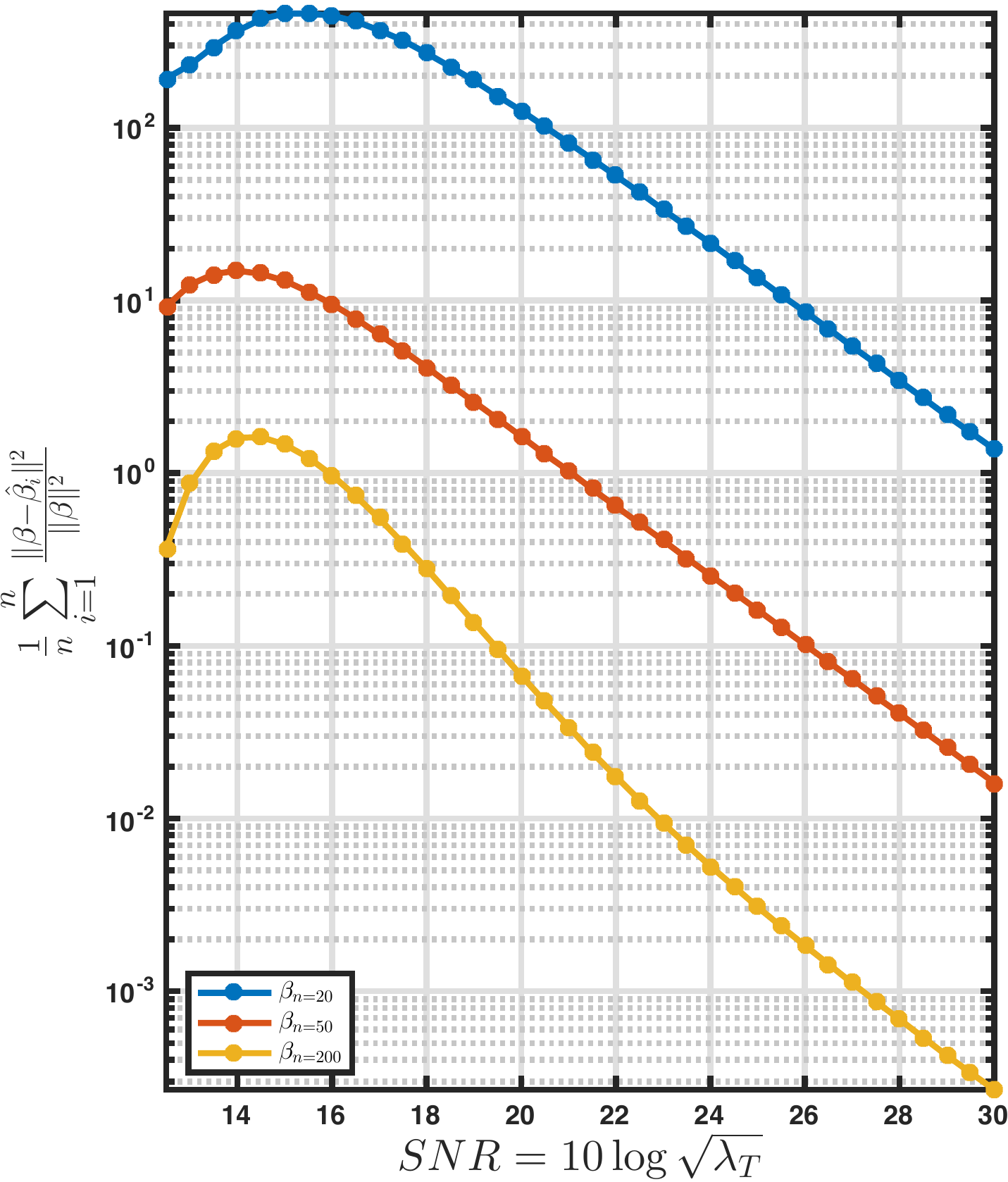}
\caption{ Estimation of $\beta$ }
  \label{fig:fig8}
  \end{subfigure}
\caption{Performance of $\alpha$ and $\beta$ in the presence of noise for parabolic trajectories.}
\label{fig:test}
\end{figure}

\begin{figure}[!htb]
\centering
\begin{subfigure}{.25\textwidth}
  \centering
\includegraphics[width=.9\linewidth]{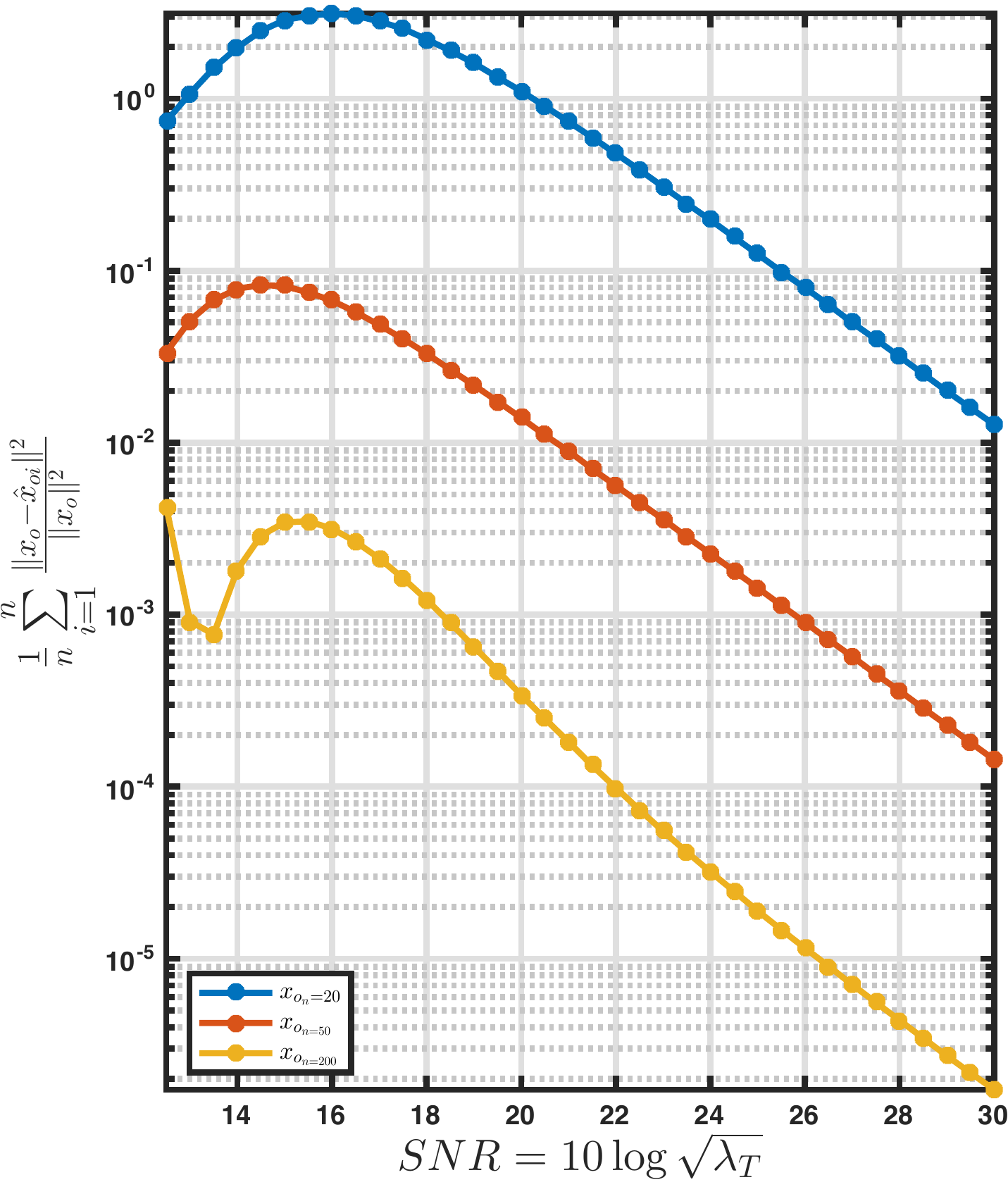}
\caption{ Estimation of $x_o$ }
  \label{fig:fig9}
  \end{subfigure}%
\begin{subfigure}{.25\textwidth}
  \centering
\includegraphics[width=.9\linewidth]{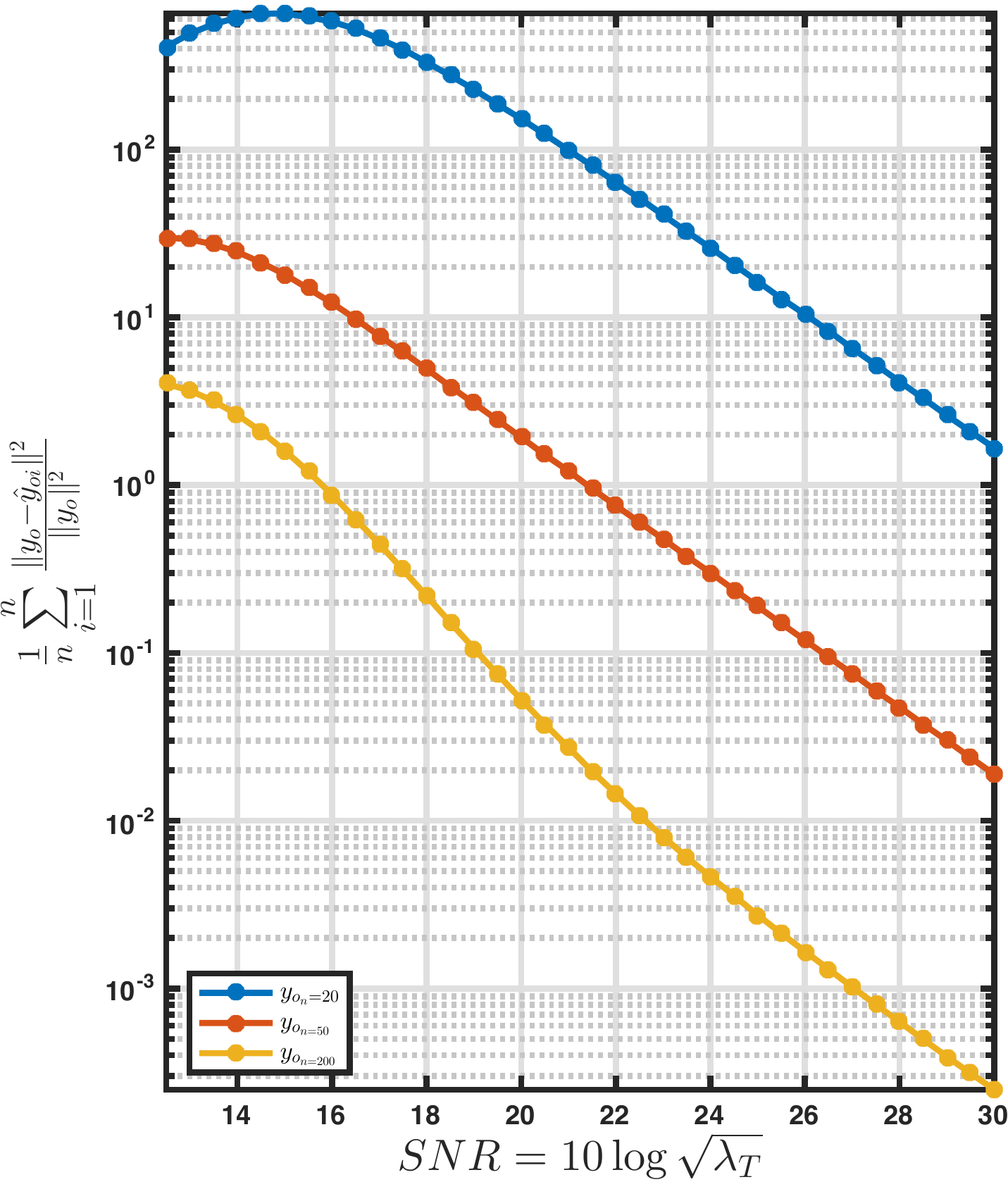}
\caption{ Estimation of $y_o$ }
  \label{fig:fig10}
  \end{subfigure}
\caption{Performance of $y_o$ and $x_o$ in the presence of noise for parabolic trajectories.}
\label{fig:test}
\end{figure}

\begin{figure}[!htb]
\centering
\begin{subfigure}{.25\textwidth}
  \centering
\includegraphics[width=.9\linewidth]{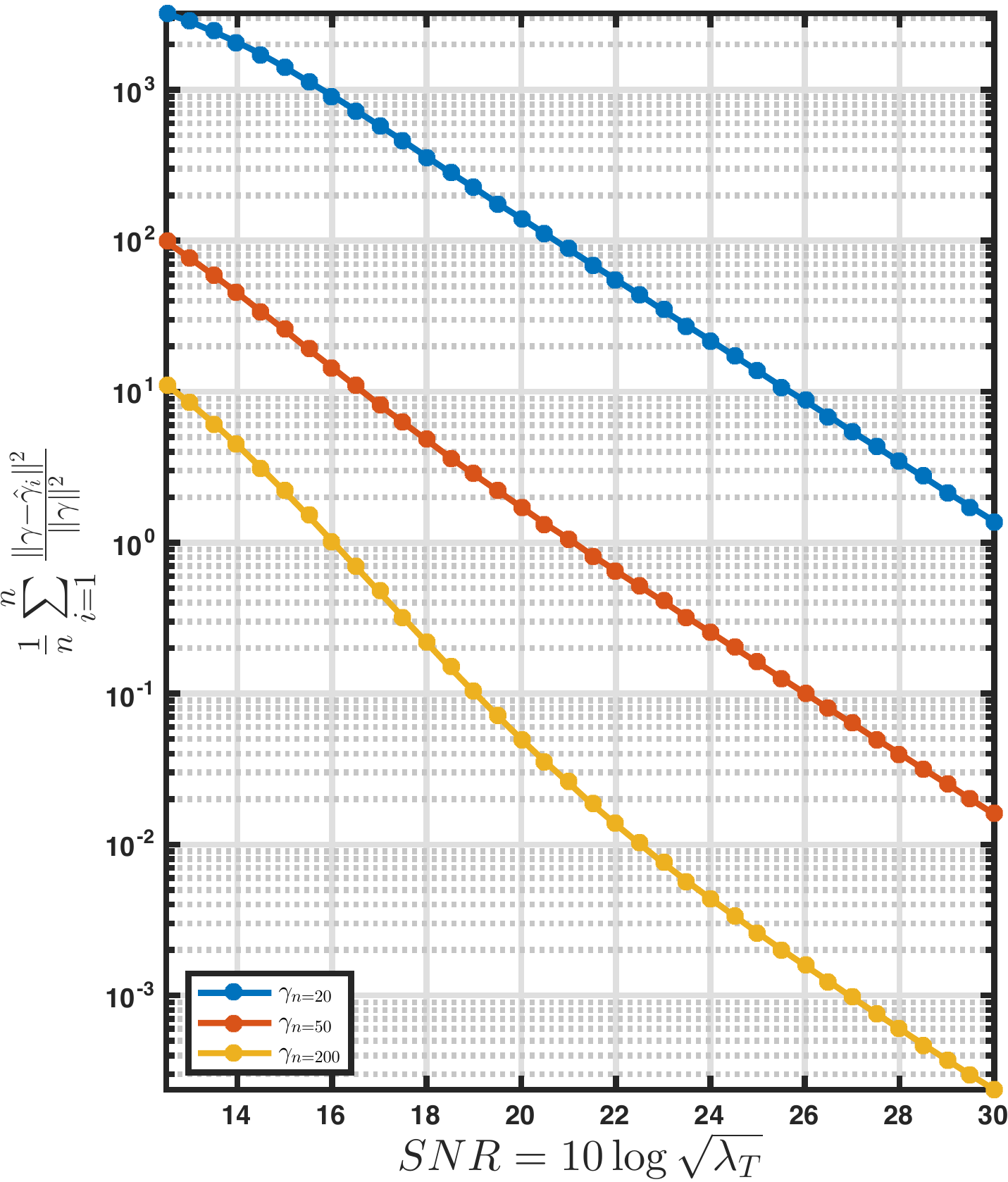}
\caption{ Estimation of $\gamma$ }
  \label{fig:fig11}
  \end{subfigure}%

\caption{Performance of $\gamma$ in the presence of noise for parabolic trajectories.}
\label{fig:test}
\end{figure}

\section{Conclusion}\la{scon}

We have considered robust and scalable algorithms for tracking of piece-wise linear and parabolic trajectories using binary proximity sensors. We have shown that, with piece-wise linear trajectories, three generically placed sensors suffice to track a single linear piece precisely in the noise free case. We have presented analytical and simulated results to show the robustness of our least squares algorithm. We have shown that, even with unknown statistics of the source, we require far fewer least squares problems, exactly $2^{n-1}-2$ less least squares problems than the previous results in \cite{bai}. We have also shown that the number of sensors is one less than the number used in \cite{bai} and \cite{bw} for exact  tracking in the noise free case. Further, we have derived a theoretical upper bound on the achievable error variance for the constant speed and elevation angle of the  linear trajectory. We have also extended our work to parabolic trajectories, which mimic better trajectories of automobiles on highways. We have shown that with parabolic trajectories no more than six generic sensors are required in the noise free case. Our solution for parabolic trajectories however does not utilize the time difference of between entering and leaving a sensors sensing range. The total number of sensors may or may not reduce when that information is incorporated.

\bibliographystyle{abbrv}

\bibliography{refs}

\begin{thebibliography}{10}

\bibitem{aslam}
J.~Aslam, Z.~Butler, F.~Constantin, V.~Crespi, G.~Cybenko, and D.~Rus.
\newblock Tracking a moving object with a binary sensor network.
\newblock In {\em Proceedings of the 1st international conference on Embedded
  networked sensor systems}, pages 150--161. ACM, 2003.

\bibitem{bai}
E.-w. Bai, H.~E. Baidoo-Williams, R.~Mudumbai, and S.~Dasgupta.
\newblock Robust tracking of piecewise linear trajectories with binary sensor
  networks.
\newblock {\em Automatica}, 61:134--145, 2015.

\bibitem{bw2}
H.~E. Baidoo-Williams.
\newblock Novel techniques for estimation and tracking of radioactive sources.
\newblock 2014.

\bibitem{bw}
H.~E. Baidoo-Williams, S.~Dasgupta, R.~Mudumbai, and E.~Bai.
\newblock On the gradient descent localization of radioactive sources.
\newblock {\em IEEE Signal Processing Letters}, 20(11):1046--1049, 2013.

\bibitem{bus}
Y.~Busnel, L.~Querzoni, R.~Baldoni, M.~Bertier, and A.-M. Kermarrec.
\newblock On the deterministic tracking of moving objects with a binary sensor
  network.
\newblock In {\em International Conference on Distributed Computing in Sensor
  Systems}, pages 46--59. Springer, 2008.

\bibitem{how}
J.~W. Howse, L.~O. Ticknor, and K.~R. Muske.
\newblock Least squares estimation techniques for position tracking of
  radioactive sources.
\newblock {\em Automatica}, 37(11):1727--1737, 2001.

\bibitem{kim}
W.~Kim, K.~Mechitov, J.-Y. Choi, and S.~Ham.
\newblock On target tracking with binary proximity sensors.
\newblock In {\em IPSN 2005. Fourth International Symposium on Information
  Processing in Sensor Networks, 2005.}, pages 301--308. IEEE, 2005.

\bibitem{liu}
A.~H. Liu, J.~J. Bunn, and K.~M. Chandy.
\newblock Sensor networks for the detection and tracking of radiation and other
  threats in cities.
\newblock In {\em Information Processing in Sensor Networks (IPSN), 2011 10th
  International Conference on}, pages 1--12. IEEE, 2011.

\bibitem{nem}
R.~J. Nemzek, J.~S. Dreicer, D.~C. Torney, and T.~T. Warnock.
\newblock Distributed sensor networks for detection of mobile radioactive
  sources.
\newblock {\em IEEE Transactions on Nuclear Science}, 51(4):1693--1700, 2004.

\bibitem{ngu}
T.~Nguyen, D.~Nguyen, H.~Liu, and D.~A. Tran.
\newblock Stochastic binary sensor networks for noisy environments.
\newblock {\em International Journal of Sensor Networks}, 2(5-6):414--427,
  2007.

\bibitem{shri}
N.~Shrivastava, R.~M.~U. Madhow, and S.~Suri.
\newblock Target tracking with binary proximity sensors: fundamental limits,
  minimal descriptions, and algorithms.
\newblock In {\em Proceedings of the 4th international conference on Embedded
  networked sensor systems}, pages 251--264. ACM, 2006.

\bibitem{shri2}
N.~Shrivastava, R.~Mudumbai, U.~Madhow, and S.~Suri.
\newblock Target tracking with binary proximity sensors.
\newblock {\em ACM Transactions on Sensor Networks (TOSN)}, 5(4):30, 2009.

\bibitem{wang}
Z.~Wang, E.~Bulut, and B.~K. Szymanski.
\newblock Distributed energy-efficient target tracking with binary sensor
  networks.
\newblock {\em ACM Transactions on Sensor Networks (TOSN)}, 6(4):32, 2010.

\end{thebibliography}

\end{document}